\titleformat{\subsection}[runin]
{\bfseries} {\thesubsection{.}}{0.15cm}{}[.]
\titleformat{\subsubsection}[runin]
{\em}{\thesubsubsection{.}}{0.15cm}{}[.]
\newtheorem{theorem}{Theorem}[section]
\newtheorem{proposition}[theorem]{Proposition}
\newtheorem{lemma}[theorem]{Lemma}
\theoremstyle{definition}
\newtheorem{definition}[theorem]{Definition}
\newtheorem{remark}[theorem]{Remark}
\numberwithin{equation}{section}
\numberwithin{figure}{section}
\newcommand\D{\overline{\mathbb D}}
\renewcommand\D{\mathbb D}
\newcommand\igot{\mathfrak{i}}
\renewcommand\igot{\mathfrak{i}}
\renewcommand\imath{\igot}
\begin{document}

\fancyhead[LO]{Curvature of minimal graphs}
\fancyhead[RE]{ D. Kalaj}
\fancyhead[RO,LE]{\thepage}

\thispagestyle{empty}

%% Title
\vspace*{1cm}
\begin{center}
{\bf\LARGE   On a sharp form of curvature conjecture for  minimal graphs}

\vspace*{0.5cm}

%% Authors
{\large\bf  David Kalaj }
\end{center}

%% Addresses and finantial support
%\footnote[0]{\vspace*{-0.4cm}
%}
%% Abstract, keywords, and MSC

\vspace*{1cm}

\begin{quote}
{\small
\noindent {\bf Abstract.}\hspace*{0.1cm}
Recently, the author and Melentijevi\'c resolved the longstanding Gaussian curvature problem (\cite{kalaj2025gaussian}) by establishing a sharp inequality
\(
|\mathcal{K}| < c_0 = \frac{\pi^2}{2}
\)
for minimal graphs defined over the unit disk, evaluated at the point on the graph lying directly above the origin. The constant $c_0$ is called Heinz constant. Using this result, we derive an improved estimate for the Hopf constant $ c_1 $. Furthermore, we prove that for any given unit normal $ \mathbf{n} $, there always exists a minimal graph over the unit disk, bending at coordinate directions, whose Gaussian curvature at the point above the origin is smaller—but arbitrarily close to—the Gaussian curvature of the corresponding Scherk-type surface associated with this normal, lying above a bicentric quadrilateral. This sharp inequality strengthens a classical result of Finn and Osserman, which holds in the specific case where the unit normal is $(0,0,1)$.

\noindent{\bf Keywords}\hspace*{0.1cm} Conformal minimal surface, minimal graph, curvature}

\vspace*{0.2cm}

%\noindent{\bf Keywords}\hspace*{0.1cm} conformal minimal surface, minimal graph, curvature

\vspace*{0.1cm}

\noindent{\bf MSC (2010):}\hspace*{0.1cm} 53A10, 32B15, 32E30, 32H02
% 53D10  contact, Legendrian
%  37J55 Contact systems
%  32E30 Runge approx.
%  32H02 holom mappings, embeddings,...

\vspace*{0.1cm}
\noindent{\bf Date: \today} %\rm June 17, 2017. This version: \today}
\end{quote}

\vspace{0.2cm}

%%%%%%%%%%
%%%%%%%%%%
%%%%%%%%%%
%%%%%%%%%%
%%%%%%%%%%
%%%%%%%%%%
\section{Introduction}

A minimal graph is a surface in $\mathbb{R}^3$ that locally minimizes its area. Such a surface can be represented as the graph of a function $\mathbf{f}: \Omega \to \mathbb{R}$, where $\Omega$ is a domain in $\mathbb{R}^2$. The surface is given by the set of points $(u, v, \mathbf{f}(u, v))$, $(u,v)\in\Omega$. The condition for the graph to be minimal is that its mean curvature is zero. This leads to the minimal surface equation:
$$
\text{div} \left( \frac{\nabla \mathbf{f}}{\sqrt{1 + |\nabla \mathbf{f}|^2}} \right) = 0
$$
which can be written out explicitly as:
$$
(1+\mathbf{f}_v^2)\mathbf{f}_{uu} - 2\mathbf{f}_u \mathbf{f}_v \mathbf{f}_{uv} + (1+\mathbf{f}_u^2)\mathbf{f}_{vv} = 0
$$
In this paper, we focus on minimal graphs defined over the unit disk $\mathbb{D} = \{(u,v) \in \mathbb{R}^2 : u^2 + v^2 \leq 1\}$.

The Gaussian curvature, $\mathcal{K}$, of a surface is a measure of its intrinsic curvature. It can be expressed in terms of the function $\mathbf{f}$ and its derivatives. The first fundamental form of the surface is given by:
$$
I = E \, du^2 + 2F \, du \, dv + G \, dv^2
$$
where $E = 1+\mathbf{f}_u^2$, $F = \mathbf{f}_u \mathbf{f}_v$, and $G = 1+\mathbf{f}_v^2$. The second fundamental form is given by:
$$
II = L \, du^2 + 2M \, du \, dv + N \, dv^2
$$
where $L = \frac{\mathbf{f}_{uu}}{W}$, $M = \frac{\mathbf{f}_{uv}}{W}$, $N = \frac{\mathbf{f}_{vv}}{W}$, and $W = \sqrt{1+\mathbf{f}_u^2+\mathbf{f}_v^2}$.
The Gaussian curvature is defined as $\mathcal{K} = \frac{LN-M^2}{EG-F^2}$. Substituting the expressions for $E, F, G, L, M, N$, we get:
$$
\mathcal{K} = \frac{\mathbf{f}_{uu}\mathbf{f}_{vv}-\mathbf{f}_{uv}^2}{(1+\mathbf{f}_u^2+\mathbf{f}_v^2)^2}
$$
Since the surface is minimal, its mean curvature $H = \frac{1}{2}\frac{EN-2FM+GL}{EG-F^2} = 0$. This implies that $L(1+\mathbf{f}_v^2) - 2Mf_uf_v + N(1+\mathbf{f}_u^2) = 0$, which is exactly the minimal surface equation mentioned above.

Notice that, in a graph the condition, which will be used in our results $\mathbf{f}_{uv}(0,0)=0$, means that at the origin, the local coordinate system formed by the $u$ and $v$ axes is already aligned with the principal directions. This simplifies calculations and provides a clear geometric interpretation: the surface is bending maximally in one coordinate direction and minimally (with opposite sign curvature) in the other, and these directions are aligned with the chosen $u$ and $v$ axes.  We say that such a graph is \emph{bending in coordinate directions at zero}. Notice that every minimal graph can be transformed into such a graph by an appropriate rotation around the $z$-axis.

\section{The Heinz and Hopf constants and the main results}

The constants defined by \textbf{Heinz and Hopf } are related to the maximum possible value of the absolute value of Gaussian curvature for a minimal graph over a disk.

\subsection{Heinz's Constant}

In 1952, E. Heinz proved that for any minimal graph over a disk of radius $R$, the Gaussian curvature is bounded on the point above the center of the disk by an absolute constant. Specifically, there exists a constant $C_0$ such that for any minimal graph $\mathbf{f}: D_R \to \mathbb{R}$, we have:
$$|\mathcal{K}(\xi)| \leq \frac{C_0}{R^2}$$
where $\xi$ is the point above the center of the disk $D_R$. For the unit disk ($R=1$), the inequality simplifies to $|\mathcal{K}(\xi)| \leq C_0$. The main problem was to find the constant $$c_0=\inf\{C_0, \emph{so that the previous inequality does hold}\}.$$ We call this constant \textbf{Heinz's constant}. The conjectured value for this constant, which was later proven by the author and Melentijevi\'c in \cite{kalaj2025gaussian}, is $c_0 = \frac{\pi^2}{2}$. This value is achieved for the ``Scherk's surface,'' a classical example of a minimal surface with a particular periodic structure.

Finn and Osserman in \cite{FinnOsserman1964} proved that $c_0=\pi^2/2$ under condition that the gradient vanishes at origin. They also proved this theorem \begin{proposition}\label{proposs} \textit{Given any number \( c < \pi^2/2 \), there exists a minimal surface \( \mathbf{f}(x,y) \) defined over \( x^2 + y^2 < 1 \) whose gradient vanishes at the origin and whose Gauss curvature at the origin satisfies \( |K| > c  \).}
\end{proposition}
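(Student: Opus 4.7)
The plan is to approximate, by a sequence of minimal graphs over $\D$, the extremal Scherk-type minimal surface over the square inscribed in the unit disk, and to show that the Gaussian curvatures of the approximants at the origin converge to $-\pi^2/2$. With $L=\sqrt{2}$ and $Q=(-L/2,L/2)^2$ (the axis-aligned square inscribed in $\D$, whose four vertices $(\pm 1/\sqrt 2,\pm 1/\sqrt 2)$ lie on $\partial\D$), the classical Scherk function
\[
\mathbf{F}(u,v)=\frac{L}{\pi}\log\frac{\cos(\pi v/L)}{\cos(\pi u/L)}
\]
is a minimal graph on $Q$ with $\nabla\mathbf{F}(0)=\mathbf{0}$, $\mathbf{F}_{uv}(0)=0$, and $\mathbf{F}_{uu}(0)=-\mathbf{F}_{vv}(0)=\pi/L$, whence $|\mathcal{K}_{\mathbf{F}}(0)|=\pi^2/L^2=\pi^2/2$. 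Since $\mathbf{F}\to+\infty$ on the vertical sides and $-\infty$ on the horizontal sides of $\partial Q$, $\mathbf{F}$ is not itself a graph over $\D$, which is what forces an approximation.

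For each $n\in\N$ I would solve the Dirichlet problem for the minimal surface equation on $\D$ with continuous boundary data $\phi_n\colon\partial\D\to\R$ equal to $+n$ on the two arcs of $\partial\D$ subtending the vertical sides of $Q$, equal to $-n$ on the two complementary arcs subtending the horizontal sides, and monotonically interpolated on four narrow transition arcs of angular width $\varepsilon_n\to 0$ centered at the vertex directions $\theta\in\{\pm\pi/4,\pm 3\pi/4\}$, chosen to inherit the full dihedral symmetry of $Q$ (in particular $\phi_n(\theta+\pi)=\phi_n(\theta)$). Rad\'o's theorem yields a unique minimal graph $\mathbf{f}_n\colon\D\to\R$ with $\mathbf{f}_n|_{\partial\D}=\phi_n$, and the central symmetry of $\phi_n$ is inherited by $\mathbf{f}_n$, forcing $\nabla\mathbf{f}_n(0)=\mathbf{0}$.

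The essential step is $\mathbf{f}_n\to\mathbf{F}$ in $C^2_{\mathrm{loc}}(Q)$, for then $\mathcal{K}_{\mathbf{f}_n}(0)\to-\pi^2/2$ and $|\mathcal{K}_{\mathbf{f}_n}(0)|>c$ for all $n$ large enough. My plan follows the Jenkins--Serrin approximation scheme: (i) use translates $\mathbf{F}\pm K$ as interior barriers on the level-set sub-domains $Q_K=\{(u,v)\in Q:|\mathbf{F}(u,v)|<K\}$ to produce uniform $L^\infty$-bounds for $\mathbf{f}_n$ on compacta of $Q$; (ii) apply the classical interior gradient estimates for minimal graphs (Finn, Bombieri--De Giorgi--Miranda) together with quasilinear Schauder theory to upgrade these to $C^{2,\alpha}$-bounds on compacta; (iii) extract a subsequential $C^2_{\mathrm{loc}}(Q)$-limit $\mathbf{f}_\infty$, a minimal graph on $Q$; and (iv) invoke the barrier inequalities to conclude that $\mathbf{f}_\infty$ blows up to $\pm\infty$ on the four sides of $\partial Q$ in the Scherk pattern, so that the Jenkins--Serrin uniqueness theorem (combined with central symmetry, which fixes the residual additive constant) identifies $\mathbf{f}_\infty=\mathbf{F}$.

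The main obstacle lies in steps (i) and (iv): one must show that the boundary trace of $\mathbf{f}_n$ on the four sides of $\partial Q$ (which lie interior to $\D$) genuinely diverges to $\pm\infty$ as $n\to\infty$, despite the rapid transition of $\phi_n$ between $+n$ and $-n$ across each of the four vertex-arcs of $\partial\D$. I would attach a half-catenoid barrier along each large-value arc of $\partial\D$ to bound $\mathbf{f}_n$ from below (resp.\ above) by a function which itself blows up to $+\infty$ (resp.\ $-\infty$) on the adjacent side of $Q$, and would choose the transition widths $\varepsilon_n$ to decay slowly enough that the small-measure vertex arcs do not disturb this propagation of boundary growth into the interior.
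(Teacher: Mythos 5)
Your strategy---approximating the Scherk graph over the inscribed square by solutions $\mathbf{f}_n$ of Dirichlet problems on $\D$ with boundary data $\pm n$---is genuinely different from the route taken here. The paper does not reprove Proposition~\ref{proposs} directly (it is quoted from Finn--Osserman) but recovers it as the case $w=0$ of Theorem~\ref{inth}, whose proof lives entirely on the Weierstrass/harmonic-map side: the approximating graphs come from the harmonic Riemann mapping theorem with dilatation $\mu_k(z)=-k^2z^2$, the curvature at the centre is the explicit quantity \eqref{mk0}, and the only analytic work is the continuous dependence of $f^k_z(0)$ on the Beltrami coefficient together with the identification of the $k\to1^-$ limit with the Scherk value. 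That route also realizes every intermediate curvature value and every normal direction, not only the vertical one. Your set-up (symmetric data, Rad\'o, the antipodal symmetry forcing $\nabla\mathbf{f}_n(0)=\mathbf{0}$ and $\mathbf{f}_{n,uv}(0)=0$) is correct as far as it goes.

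The gap lies in the two devices you propose for the hard steps, neither of which works as described. First, using the translates $\mathbf{F}\pm K$ as barriers on $Q_K=\{|\mathbf{F}|<K\}$ is circular: to run the comparison principle on $Q_K$ you must already control $\mathbf{f}_n$ on $\partial Q_K$, which is precisely the uniform local bound you are trying to establish; and you cannot compare on $Q$ itself, since $\mathbf{F}=+\infty$ on two sides and $-\infty$ on the other two, so neither inequality $\mathbf{f}_n\lessgtr\mathbf{F}+C$ holds on all of $\partial Q$. Second, a half-catenoid graph grows only logarithmically and is therefore bounded on every bounded set, so catenoid barriers can never certify that the trace of $\mathbf{f}_n$ diverges to $+\infty$ on the open vertical sides of $Q$; they yield at most a finite lower bound near the arcs. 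What is actually required is the full Jenkins--Serrin apparatus: Scherk-type barriers over small quadrilaterals with a straight side adjacent to each arc, and the flux/structure theorem showing that the divergence set of a subsequence is bounded by straight chords of $\D$, which the flux inequalities and the dihedral symmetry then force to be exactly the sides of $Q$. Without these, neither the uniform bounds in your step (i) nor the identification $\mathbf{f}_\infty=\mathbf{F}$ in step (iv) is proved. If you wish to avoid this machinery, the harmonic-map formulation used in the paper is the cheaper path, since all of the boundary degeneration is absorbed into the convergence of normalized solutions of the Beltrami equation.
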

In this paper we will improve and generalize Proposition~\ref{proposs} by proving
 \begin{theorem} \label{inth}
Let
\[
S_+^2 = \{(\mathrm{x},\mathrm{y},\mathrm{z}) \in \mathbb{R}^3 : \mathrm{x}^2 + \mathrm{y}^2 + \mathrm{z}^2 = 1, \quad \mathrm{z} \geq 0 \}
\]
denote the closed upper hemisphere. For every unit vector
\(
n=(\mathrm{x},\mathrm{y},\mathrm{z}) \in S_+^2
\)
and every
\[
0 \leq \kappa < \mathbf{c}_0(\frac{\mathrm{x}+i\mathrm{y}}{1 - \mathrm{z}})\le \frac{\pi^2}{2},
\]
where \( \mathbf{c}_0 \) is defined in \eqref{finoser0},
there exists a minimal graph
\[
\Sigma = \{ (u,v,\mathbf{f}(u,v)) : (u,v) \in \mathbb{D} \},
\]
over the unit disk \( \mathbb{D} \subset \mathbb{R}^2 \), $\mathbf{f}(0,0)=0$, such that:

\begin{itemize}
    \item The unit normal at the origin \( 0 \) equals \( n \),
    \item The surface bends in the coordinate directions at \( 0 \),
    \item The absolute value of the Gaussian curvature at \( 0 \) satisfies
    \[
    |K_\Sigma(0)| = \kappa.
    \]
\end{itemize}

Moreover, the condition
\(
\kappa < c_1(w)
\)
is sharp. In particular, for $n=(0,0,1)$, $\mathbf{c}_0(0)=\pi^2/2$ and thus, this result recovers Proposition~\ref{proposs}.

\end{theorem}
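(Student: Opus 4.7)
My strategy combines the Weierstrass--Enneper parametrization, a family of Scherk-type graphs whose Gaussian curvatures at the origin approach $\mathbf{c}_0(w)$ from the Heinz-type analysis of \cite{kalaj2025gaussian}, and a homothetic rescaling together with the intermediate value theorem to realize each prescribed $\kappa$.

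\textbf{Setup via Weierstrass data and extremal family.} Using the Weierstrass representation with Gauss map $g$ and height differential $\eta$ on the conformal parameter disk, the prescribed-normal condition translates into $g(0) = w$; the bending-in-coordinate-directions condition $\mathbf{f}_{uv}(0,0) = 0$ becomes a reality condition on $g'(0)$ and can always be arranged by a rotation in the $(u,v)$-plane as noted in the introduction; and the Gaussian curvature at the origin takes the standard form
\[
|K_\Sigma(0)| = \frac{4\,|g'(0)|^2}{(1+|g(0)|^2)^4\,|\eta(0)|^2}.
\]
The Heinz-constant analysis of \cite{kalaj2025gaussian} underlying the definition \eqref{finoser0} of $\mathbf{c}_0(w)$ supplies, for each $w$, a sequence of Scherk-type minimal graphs $\Sigma_w^{(k)}$ over $\mathbb{D}$, each with unit normal $n$ at the origin, bending in coordinate directions, and Gaussian curvatures $\kappa^{(k)} := |K_{\Sigma_w^{(k)}}(0)|$ satisfying $\kappa^{(k)} \to \mathbf{c}_0(w)$. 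Geometrically these are Scherk-type surfaces over bicentric quadrilaterals $Q_w^{(k)}$ containing $\mathbb{D}$ and approaching the critical $Q_w$ whose inscribed disk is $\mathbb{D}$.

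\textbf{Homothetic rescaling and intermediate value.} Given a target $\kappa \in (0,\mathbf{c}_0(w))$, fix $k$ with $\kappa^{(k)} > \kappa$ and consider the one-parameter family of homothetic rescalings
\[
\mathbf{f}_t(u,v) := t\,\mathbf{f}^{(k)}(u/t, v/t), \qquad t \ge 1,
\]
where $\mathbf{f}^{(k)}$ is the height function of $\Sigma_w^{(k)}$. Each $\mathbf{f}_t$ is a minimal graph on $t\mathbb{D} \supset \mathbb{D}$; its restriction to $\mathbb{D}$ is a minimal graph over the unit disk with $\nabla \mathbf{f}_t(0) = \nabla \mathbf{f}^{(k)}(0)$ (so the unit normal at the origin is preserved), $(\mathbf{f}_t)_{uv}(0) = t^{-1}\mathbf{f}^{(k)}_{uv}(0) = 0$ (so the bending condition survives), and Gaussian curvature $|K_{\mathbf{f}_t}(0)| = \kappa^{(k)}/t^2$. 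As $t$ varies continuously over $[1,\infty)$ the curvature varies continuously from $\kappa^{(k)}$ down to $0$, and by the intermediate value theorem there exists $t_\star$ with $|K_{\mathbf{f}_{t_\star}}(0)| = \kappa$. For $\kappa = 0$ the required graph is the affine one whose plane has unit normal $n$. Sharpness of $\kappa < \mathbf{c}_0(w)$ is exactly the generalized Heinz estimate.

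\textbf{Main obstacle.} The delicate point is the Scherk-type construction of the sequence $\Sigma_w^{(k)}$: producing, for every prescribed Gauss-map value $w$, Scherk-type minimal graphs genuinely over $\mathbb{D}$ whose curvatures approach $\mathbf{c}_0(w)$ and which simultaneously preserve $g(0) = w$ and the coordinate-direction bending condition. This relies on the detailed analysis in \cite{kalaj2025gaussian}, in particular on identifying the appropriate bicentric quadrilateral $Q_w$ whose inscribed disk is $\mathbb{D}$ and whose symmetries (perpendicular axes aligned with the $(u,v)$-axes) force $\mathbf{f}_{uv}(0,0) = 0$. Once this family is in place, the Weierstrass reformulation, homothety, and intermediate-value arguments above go through routinely.
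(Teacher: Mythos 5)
Your rescaling-plus-IVT step is fine (and is a clean alternative to the continuity argument the paper uses), but the proposal has a genuine gap exactly at the point you flag as the ``main obstacle,'' and the way you propose to fill it would not work. The heart of Theorem~\ref{inth} is the existence, for each $w$, of genuine minimal graphs over the \emph{whole} unit disk with unit normal $n$ at the origin, $\mathbf{f}_{uv}(0,0)=0$, and curvature arbitrarily close to $\mathbf{c}_0(w)$. You assert that \cite{kalaj2025gaussian} supplies such a sequence as ``Scherk-type surfaces over bicentric quadrilaterals $Q_w^{(k)}$ containing $\mathbb{D}$ and approaching the critical $Q_w$ whose inscribed disk is $\mathbb{D}$.'' This misreads the geometry: in Theorem~\ref{prejprej} the extremal quadrilateral is \emph{inscribed in the unit circle} (vertices on $\mathbf{T}$), so it does not contain $\mathbb{D}$ and cannot be approximated by quadrilaterals that do. Restrictions to $\mathbb{D}$ of Scherk surfaces over circumscribing quadrilaterals have curvature bounded away from $\mathbf{c}_0(w)$ --- for $n=(0,0,1)$ the best such configuration is the square circumscribing $\mathbb{D}$, giving $|K(0)|=\pi^2/4$, not values approaching $\pi^2/2$. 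So the approximating family cannot be produced this way, and deferring its construction to the cited reference leaves the main content of the theorem unproved (indeed, realizing all values in $[0,\mathbf{c}_0(w))$ is precisely what establishes the sharpness).

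The paper's route is different and self-contained at this point: it damps the extremal dilatation, setting $\mu_k(z)=\bigl(\tfrac{w+k\imath(1-w^4)z/|1-w^4|}{1+k\imath\overline{w}(1-w^4)z/|1-w^4|}\bigr)^2$ with $k\in[0,1)$, so that $\|\mu_k\|_\infty<1$; the Hengartner--Schober theorem then yields a harmonic self-map $f^k$ of $\mathbb{D}$ solving $\bar f_z=\mu_k f_z$, whose Weierstrass--Enneper lift is a minimal graph over $\mathbb{D}$ with the prescribed normal and $D_{uv}\mathbf{f}(0,0)=0$ (the phase factor $\imath(1-w^4)/|1-w^4|$ is engineered for this), and with $|\mathcal{K}_k(0)|=k^2(1-|w|^2)^2/(|f^k_z(0)|^2(1+|w|^2)^4)$. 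The interval $[0,\mathbf{c}_0(w))$ is then swept out by proving that $k\mapsto f^k_z(0)$ is continuous (Lemma~\ref{lelepa}, via a fixed-point argument for the Beltrami equation) and letting $k\to1^-$. A secondary issue: you claim the bending condition ``can always be arranged by a rotation in the $(u,v)$-plane,'' but such a rotation also rotates the horizontal part of the prescribed normal $n$, so it cannot be invoked independently of the normal condition; the two constraints must be met simultaneously by the construction itself, as the paper's choice of $\mu_k$ does.
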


\subsection{Hopf's Constant}

Shortly after Heinz's work, E. Hopf introduced a related problem.  He considered a different quantity to bound: the product of the mofulus of the Gaussian curvature and the square of the norm of the normal vector. For a minimal graph of a function $\mathbf{f}(u,v)$, where $(u,v)\in D$, the Gaussian unit normal is given by:
$$ \mathcal{N} = \frac{(-\mathbf{f}_u, -\mathbf{f}_v, 1)}{\sqrt{1+\mathbf{f}_u^2+\mathbf{f}_v^2}} $$
The norm of the normal vector is $W = \sqrt{1+\mathbf{f}_u^2+\mathbf{f}_v^2}$. The quantity of interest is:
$$W^2 |\mathcal{K}| = W^2 \frac{|\mathbf{f}_{uu}\mathbf{f}_{vv}-\mathbf{f}_{uv}^2|}{W^4} = \frac{|\mathbf{f}_{uu}\mathbf{f}_{vv}-\mathbf{f}_{uv}^2|}{W^2}$$
Hopf sought the best constant $c_1$ in the inequality $W^2|\mathcal{K}| \leq c_1$ for all minimal graphs over the unit disk. This constant is now known as the \textbf{Hopf constant}.
The value of this constant is also conjectured to be $c_1 = \frac{\pi^2}{2}\approx4.9348$. Evidence for this conjecture was provided by Finn and Osserman for minimal graphs with a horizontal plane at the origin, and by Nitsche in \cite{zbMATH03431423} for symmetric minimal graphs. Nitsche also gave estimates for general minimal graphs in \cite{Nitsche1965}, showing that $c_1 \le 7.678447$.

A further result by Hall in \cite{Hall1982} implies a tighter bound,  $c_1 \le \frac{16\pi^2}{27} \approx 5.84865$ (see the proof of Theorem~\ref{seconth}). In this paper, we improve this bound by proving \begin{theorem} \label{seconth}
We have that ${c}_1<\frac{2\pi ^2}{27} \left(2+\sqrt{31}\right) \approx 5.53265.$
\end{theorem}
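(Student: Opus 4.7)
The strategy is to combine two complementary upper bounds on the Hopf quantity $W^{2}|\mathcal{K}|(0)$ and then optimize over the Gauss image of the origin. The first, obtained from the Weierstrass representation via the Schwarz--Pick lemma and Hall's sharp inequality for univalent harmonic self-maps of $\mathbb{D}$, reproduces Hall's constant $\tfrac{16\pi^{2}}{27}$ and is strong when the normal at the origin is far from vertical. The second is the sharp Heinz pointwise bound $|\mathcal{K}|(0)<\tfrac{\pi^{2}}{2}$ provided by Theorem~\ref{inth} (via \cite{kalaj2025gaussian}), which is strong when the normal is close to vertical. Taking the minimum of the two at each value of $t=|g(0)|^{2}\geq 1$ and then maximizing in $t$ will yield the asserted constant.

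For the first bound, parameterize $\Sigma$ conformally by its Weierstrass data $(g,\eta)$, where $g$ is the stereographic projection of the Gauss map from the north pole; after vertical translation and a M\"obius automorphism of the parameter disk we may assume $\Phi(0)=0$. Since the graph's normal points upward, $|g|>1$. Using $n_{3}=(|g|^{2}-1)/(|g|^{2}+1)$ and $|\mathcal{K}|=16|g'|^{2}/[(1+|g|^{2})^{4}|\eta|^{2}]$, a direct computation gives
\[
W^{2}|\mathcal{K}|(0)=\frac{16\,|g'(0)|^{2}}{(|g(0)|^{2}+1)^{2}\,(|g(0)|^{2}-1)^{2}\,|\eta(0)|^{2}}.
\]
The Schwarz--Pick lemma applied to $1/g\colon\mathbb{D}\to\mathbb{D}$ gives $|g'(0)|\leq |g(0)|^{2}-1$. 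The horizontal projection $F=\Phi_{1}+i\Phi_{2}$ is a univalent sense-preserving harmonic self-map of $\mathbb{D}$ fixing the origin, with $|F_{z}(0)|=|g(0)|^{2}|\eta(0)|/2$ and $|F_{\bar z}(0)|=|\eta(0)|/2$. Hall's sharp harmonic-map inequality $|F_{z}(0)|^{2}+|F_{\bar z}(0)|^{2}\geq \tfrac{27}{4\pi^{2}}$ then yields $|\eta(0)|^{2}(|g(0)|^{4}+1)\geq 27/\pi^{2}$, and combining these produces Hall's bound
\[
W^{2}|\mathcal{K}|(0)\leq \frac{16\pi^{2}(t^{2}+1)}{27(t+1)^{2}},\qquad t=|g(0)|^{2},
\]
whose supremum $\tfrac{16\pi^{2}}{27}$ is only approached as $t\to\infty$. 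Simultaneously, $|\mathcal{K}|(0)<\pi^{2}/2$ combined with $W^{2}=(t+1)^{2}/(t-1)^{2}$ gives the complementary bound $W^{2}|\mathcal{K}|(0)<\pi^{2}(t+1)^{2}/[2(t-1)^{2}]$.

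The first bound is increasing in $t$ while the second is decreasing, so the supremum over $t>1$ of their minimum is attained at the unique crossover $t^{*}$ where both bounds agree. Equating them and substituting $z=(t-1)/(t+1)=n_{3}\in(0,1)$ reduces the crossover condition to the quadratic $16z^{4}+16z^{2}-27=0$, whose positive root in $z^{2}$ is $z^{*\,2}=(\sqrt{31}-2)/4$; equivalently, $x=27/(4z^{*\,2})=2+\sqrt{31}$ is the positive root of $x^{2}-4x-27=0$. The common value of both bounds at $z^{*}$ is
\[
\frac{\pi^{2}}{2\,z^{*\,2}}=\frac{2\pi^{2}}{27}\bigl(2+\sqrt{31}\bigr),
\]
which is the claimed constant; strictness of $c_{1}<\tfrac{2\pi^{2}}{27}(2+\sqrt{31})$ is inherited from the strict inequality $|\mathcal{K}|(0)<\mathbf{c}_{0}(w)\leq\pi^{2}/2$ in Theorem~\ref{inth}. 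The main technical obstacle is the Weierstrass step: correctly identifying $F$ as a univalent sense-preserving harmonic self-map of $\mathbb{D}$ fixing the origin and applying Hall's sharp constant $27/(4\pi^{2})$. Once both upper bounds are in hand, the crossover analysis and the reduction to $16z^{4}+16z^{2}-27=0$ are routine algebra.
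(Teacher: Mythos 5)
Your proposal is correct and follows essentially the same strategy as the paper: it combines Hall's coefficient inequality (giving the bound $\tfrac{16\pi^2}{27}\tfrac{t^2+1}{(t+1)^2}$ as a function of the Gauss map value at the origin) with the sharp Heinz bound $|\mathcal{K}|(0)<\pi^2/2$ multiplied by $W^2$, and then maximizes the minimum of the two at their crossover. The only differences are cosmetic — you apply Hall plus Schwarz--Pick directly to an arbitrary graph rather than reducing to the extremal Scherk maps via $\mathbf{c}_1(w)$, and your substitution $z=(t-1)/(t+1)$ yields the biquadratic $16z^4+16z^2-27=0$ in place of the paper's quartic solved via $y=x+1/x$, arriving at the same constant $\tfrac{2\pi^2}{27}(2+\sqrt{31})$.
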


\section{Preliminaries and the proof of main results}
\subsection{Riemann mapping theorem (RMT)  for  harmonic mappings} \begin{theorem}[Harmonic mapping analogue with Beltrami condition]\cite{HengartnerSchober1986}\label{hes}
Let \( \Omega \subset \mathbb{C} \) be a simply connected domain, and let \( f = h + \overline{g} \) be a complex-valued harmonic mapping from the unit disk \( \mathbb{D} \) onto \( \Omega \), where \( h \) and \( g \) are analytic in \( \mathbb{D} \).

Assume that:
\begin{itemize}
    \item \( f \) is sense-preserving and univalent,
    \item Beltrami coefficient \( \omega = \frac{g'}{h'} \) satisfies \( \|\omega\|_\infty < k < 1 \),
    \item and the normalization \( f(0) = z_0 \in \Omega \), \( h'(0) > 0 \) holds.
\end{itemize}

Then such a harmonic mapping \( f \) exists and is unique.
\end{theorem}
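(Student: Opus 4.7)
The statement separates into existence and uniqueness; I would treat each in turn.

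\emph{Existence.} My plan is to combine the classical Riemann Mapping Theorem with the Measurable Riemann Mapping Theorem (MRMT) via a fixed-point iteration. Let $F: \mathbb{D} \to \Omega$ denote the conformal Riemann map with $F(0) = z_0$ and $F'(0) > 0$. I would seek the desired $f$ in the form $f = F \circ \psi$, where $\psi: \mathbb{D} \to \mathbb{D}$ is a normalized $k$-quasiconformal self-homeomorphism with $\psi(0) = 0$. Writing $f_z = F'(\psi)\psi_z$ and $f_{\bar z} = F'(\psi)\psi_{\bar z}$, a short computation using $h' = f_z$, $g' = \overline{f_{\bar z}}$, the harmonicity $f_{z\bar z} = 0$, and the prescribed ratio $g'/h' = \omega$ gives the quasilinear Beltrami-type equation
$$
\psi_{\bar z}(z) = \overline{\omega(z)} \cdot \frac{\overline{F'(\psi(z))}}{F'(\psi(z))} \cdot \overline{\psi_z(z)},
$$
whose effective complex dilatation has modulus exactly $|\omega(z)| \le k$. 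I would solve this by iteration: set $\psi^{(0)} = \mathrm{id}$ and let $\psi^{(n+1)}$ be the unique normalized $k$-quasiconformal self-map of $\mathbb{D}$ with Beltrami coefficient $\mu_n := \overline{\omega} \cdot \overline{F'(\psi^{(n)})}/F'(\psi^{(n)}) \cdot \overline{\psi^{(n)}_z}/\psi^{(n)}_z$ produced by the MRMT. Compactness of normalized $k$-quasiconformal self-maps of $\mathbb{D}$ in the uniform topology (Mori distortion) combined with the Ahlfors--Bers continuous dependence of MRMT solutions on their datum yields a convergent subsequence whose limit $\psi^{\ast}$ is a fixed point. A rotation of $\psi^{\ast}$ at the origin enforces $h'(0) > 0$, and $f := F \circ \psi^{\ast}$ is the desired map.

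\emph{Uniqueness.} Suppose $f_1 = h_1 + \overline{g_1}$ and $f_2 = h_2 + \overline{g_2}$ are two candidates. After absorbing additive constants so that $g_i(0) = 0$ and $h_i(0) = z_0$, I would run a rigidity argument via the convex combination $f_t := (1-t) f_1 + t f_2$. Since $g_i' = \omega h_i'$ for both $i$, linearity yields $g_t' = \omega h_t'$, so $f_t$ is harmonic with the same dilatation $\omega$ and satisfies $f_t(0) = z_0$. Showing that the set of $t \in [0,1]$ for which $f_t$ is a sense-preserving homeomorphism of $\mathbb{D}$ onto $\Omega$ is both open (by stability of $k$-quasiconformal homeomorphisms) and closed, combined with the infinitesimal rigidity that the tangent vector $f_2 - f_1$ must itself be a harmonic mapping with dilatation $\omega$ vanishing at the origin, would force $f_1 \equiv f_2$. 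An alternative route is to pass to $\phi := f_2^{-1} \circ f_1: \mathbb{D} \to \mathbb{D}$, compute its Beltrami coefficient explicitly from $\mu_{f_i} = \overline{\omega}\cdot \overline{h_i'}/h_i'$, and argue that the condition $h_i'(0) > 0$ together with $\phi(0) = 0$ forces $\phi = \mathrm{id}$.

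\emph{Main obstacle.} The hardest step is the fixed-point argument in the existence proof. Although $|\mu_n| \le k$ keeps the iterates inside a compact family, proving that the iteration operator is continuous in a topology strong enough for Schauder's theorem requires $L^\infty$ control on $F''$ and on the oscillation of $\arg F'(\psi)$ and $\arg \psi_z$, all of which can degenerate near $\partial \mathbb{D}$ when $\partial \Omega$ is irregular. A safety net is to first prove the theorem for smooth Jordan $\Omega$, where $F$ extends regularly to the closure, and then exhaust a general $\Omega$ by smoothly bounded subdomains $\Omega_n \nearrow \Omega$ with $z_0 \in \Omega_n$, solve on each, and pass to the limit through normal-family compactness. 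The uniqueness step, while conceptually clean, also requires some care in justifying that the set of admissible $t$ is closed; the fallback is a direct boundary-correspondence analysis along the lines of the Radó--Kneser--Choquet theorem.
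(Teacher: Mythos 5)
A preliminary remark: the paper offers no proof of this statement---it is imported verbatim from Hengartner and Schober \cite{HengartnerSchober1986} (the ``GRMT'' quoted immediately afterwards is the general version of the same result), so there is no in-paper argument to compare against. Judged on its own terms, your proposal has genuine gaps in both halves. For existence, the ansatz $f=F\circ\psi$ is legitimate, but the equation you derive,
\[
\psi_{\bar z}(z)=\overline{\omega(z)}\,\frac{\overline{F'(\psi(z))}}{F'(\psi(z))}\,\overline{\psi_z(z)},
\]
is a Beltrami equation of the \emph{second} kind whose coefficient contains the unimodular factor $\overline{\psi_z}/\psi_z$, i.e.\ it depends on the derivative of the unknown, not only on its values. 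Your iteration therefore feeds $\overline{\psi_z^{(n)}}/\psi_z^{(n)}$ into the measurable Riemann mapping theorem at each step; the compactness you invoke gives locally uniform convergence of $\psi^{(n)}$ but only weak $L^p$ convergence of $\psi_z^{(n)}$, and the nonlinear unimodular factor does not pass to a weak limit. Hence the limit map is not shown to be a fixed point, and this is precisely where Schauder-type schemes for such quasilinear equations break down; your safety net (smooth exhaustion of $\Omega$) removes the boundary irregularity of $F$ but not this derivative-dependence. A smaller point: a fixed point would only give a solution of $f_{\bar z}=\overline{\omega}\,\overline{f_z}$, and harmonicity still has to be extracted (it does follow, since $\omega$ analytic and $|\omega|<1$ force $f_{z\bar z}=0$, but only after enough regularity is established to justify that computation).

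The uniqueness half is also incomplete. The convex combination $f_t=(1-t)f_1+tf_2$ does inherit the dilatation $\omega$, but for $t\in(0,1)$ there is no reason for $f_t$ to be univalent or to map onto $\Omega$ (for nonconvex $\Omega$ it generally is not onto), so the open--closed argument has no well-defined set to run on, and the ``infinitesimal rigidity'' of $f_2-f_1$ is asserted rather than proved. The alternative route through $\phi=f_2^{-1}\circ f_1$ also stalls: the complex dilatation of $\phi$ is not zero in general, so $\phi$ is merely a normalized quasiconformal self-homeomorphism of $\mathbb{D}$ and nothing forces $\phi=\mathrm{id}$. The standard (Hengartner--Schober) uniqueness argument works instead with the difference $F=f_1-f_2=(h_1-h_2)+\overline{(g_1-g_2)}$: since $(g_1-g_2)'=\omega(h_1-h_2)'$, one has $|F_{\bar z}|\le k|F_z|$, so $F$ is either constant or a light, open, sense-preserving harmonic map, and this is played off against the boundary correspondence of $f_1$ and $f_2$ with $\partial\Omega$. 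If you want a self-contained proof, that is the argument to reconstruct; as written, neither your existence nor your uniqueness argument closes.
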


If $\|\omega(z)\|_{\infty}=1$, then the RMT is no longer true and the following more general Riemann mapping theorem (GRMT) of Hengartner and Schober  holds.

\begin{theorem}\cite{HengartnerSchober1986}
Let $\Omega$ be a bounded simply connected domain with locally connected boundary, and let $w_{0}$ be a fixed point of $\Omega$. Also, let $\omega$ be an analytic function of $\mathbb{D}$ with $\omega(\mathbb{D})\subset\mathbb{D}$. Then there exists a univalent harmonic mapping $f$ of $\mathbb{D}$ that satisfies the following properties:

\begin{itemize}
    \item[(a)] $f$ is a solution of $\overline{f}_z(z)=\omega(z) f_{ z}(z)$.

    \item[(b)] $f$ maps $\mathbb{D}$ into $\Omega$ with $f(0)=w_{0}$ and $f_{z}(0)>0$.

    \item[(c)] The unrestricted limit $f^{*}(e^{it})=\lim_{z\to e^{it}}f(z)$ exists and belongs to $\partial\Omega$ for all but a countable subset $E$ of the unit circle $\mathbb{T}=\partial\mathbb{D}$.

    \item[(d)] The one-sided limits $\lim_{s\to t^{+}}f^{*}(e^{is})$ and $\lim_{s\to t^{-}}f^{*}(e^{is})$, through values $e^{is}\not\in E$, exist, belong to $\partial\Omega$, and are equal if $e^{it}\not\in E$ and distinct otherwise.

    \item[(e)] The cluster set of $f$ at $e^{it}\in E$ is the straight line segment joining the one-sided limits $\lim_{s\to t^{+}}f^{*}(e^{is})$ and $\lim_{s\to t^{-}}f^{*}(e^{is})$.
\end{itemize}
\end{theorem}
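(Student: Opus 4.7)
The plan is to obtain $f$ as a limit of the univalent harmonic mappings produced by the non-degenerate Theorem~\ref{hes}. First I would fix an exhaustion $\Omega_1 \subset\subset \Omega_2 \subset\subset \cdots$ of $\Omega$ by Jordan subdomains with $\bigcup_n \Omega_n = \Omega$ (such an exhaustion exists since $\Omega$ is simply connected), and assume without loss of generality $w_0 \in \Omega_1$. Set $\omega_n(z) = (1-\tfrac{1}{n})\omega(z)$, so that $\|\omega_n\|_\infty \le 1-\tfrac{1}{n} < 1$. Theorem~\ref{hes} then produces, for each $n$, a unique sense-preserving univalent harmonic mapping $f_n \colon \mathbb{D} \to \Omega_n$ solving $\overline{(f_n)_z} = \omega_n (f_n)_z$ with $f_n(0)=w_0$ and $(f_n)_z(0) > 0$.

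Since $\Omega$ is bounded, $\{f_n\}$ is a normal family of harmonic mappings; I would extract a locally uniformly convergent subsequence $f_{n_k} \to f$. Local uniform convergence of derivatives lets us pass to the limit in the Beltrami equation, yielding (a). To secure (b), I need $f_z(0) > 0$, i.e.\ a uniform positive lower bound on $(f_n)_z(0)$; this comes from the Koebe-type $\tfrac14$-theorem for sense-preserving harmonic homeomorphisms, which gives $\dist(w_0,\partial \Omega_n) \le C\,(f_n)_z(0)$ with $C$ depending on $|\omega_n(0)|=|\omega(0)|$, bounded away from $1$. Univalence of $f$ follows from a Hurwitz-type theorem: a locally uniform limit of univalent sense-preserving harmonic mappings with Jacobian bounded away from $0$ at a fixed point is either constant or univalent, and the normalization excludes constancy.

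For (c)--(d), I would use that $\partial\Omega$ is locally connected so that, by Carathéodory-type theory, prime ends of $\Omega$ correspond bijectively to $\partial\Omega$. Since $f$ is a bounded harmonic mapping, unrestricted boundary limits $f^*(e^{it})$ exist on $\mathbb T$ except possibly on a small set by Fatou-type theorems; univalence and sense-preservation force these limits to trace $\partial\Omega$ monotonically, from which one-sided limits exist everywhere and disagreements occur on at most a countable set $E$ (otherwise $\partial\Omega$ would contain uncountably many disjoint arcs).

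The main obstacle is property (e): the cluster set at $e^{it_0} \in E$ is \emph{exactly} the straight segment joining the two one-sided limits. The mechanism is that the exceptional points correspond to radial boundary points where $|\omega^*(e^{it_0})|=1$ with some argument $\alpha$, so the Beltrami equation forces the differential $df = h'\,dz + \overline{g'\,dz} = h'(dz + \overline{\omega\,dz})$ to collapse, as $z\to e^{it_0}$, onto the single real line of direction $e^{-i\alpha/2}$. A careful local analysis using the canonical decomposition $f = h + \overline g$ with $g' = \omega h'$, together with Lindelöf-type theorems applied to $h$ and $g$ on horocycles at $e^{it_0}$, shows that the image of a small boundary neighborhood degenerates to a segment of this direction, and identifies the cluster set with the segment between the one-sided limits. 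This local-to-global matching, together with the Carathéodory correspondence, is the technical heart of the argument.
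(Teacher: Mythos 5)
This statement is quoted from Hengartner--Schober \cite{HengartnerSchober1986}; the paper gives no proof of it, so there is nothing internal to compare against. Judged on its own, your sketch has real gaps. First, the normalization step is misattributed and, as cited, points the wrong way: harmonic Koebe-type covering theorems give $\dist(w_0,\partial\Omega_n)\ge c\,|(f_n)_z(0)|$, i.e.\ an \emph{upper} bound on $(f_n)_z(0)$, whereas you need the lower bound $(f_n)_z(0)\ge \dist(w_0,\partial\Omega_n)/C$. That lower bound is true, but it comes from Heinz's lemma (or Hall's inequality, the very estimate used later in this paper): restrict $f_n$ to $f_n^{-1}(D(w_0,d))$, precompose with a conformal map of $\D$ onto that preimage, and apply Heinz's inequality to the resulting harmonic self-map of the disk together with the Schwarz lemma for the conformal factor. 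Without this the Hurwitz-type argument cannot exclude degeneration of the limit to a constant or to a map onto a line segment. Second, your exhaustion produces only $f(\D)\subset\Omega$; nothing in the sketch shows $\partial f(\D)\subset\partial\Omega$, i.e.\ that the limit does not "stop short" of the boundary. The Carath\'eodory kernel theorem you are implicitly invoking is a theorem about conformal maps and has no automatic harmonic analogue, so this needs an argument.

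The most serious gap is (c)--(e), which is the actual content of the Hengartner--Schober theorem and occupies most of their paper. Fatou-type theorems give nontangential limits almost everywhere; they do not give \emph{unrestricted} limits off a countable set, and upgrading to that requires the full cluster-set analysis (monotone boundary correspondence of bounded variation, local connectivity of $\partial\Omega$, and a study of where $|\omega^*|=1$). Likewise, the "collapse of the differential" heuristic explains why segments can appear at exceptional points but proves neither inclusion of the cluster set in a single segment nor that the cluster set is the \emph{entire} segment joining the one-sided limits. For what it is worth, the original proof does not use a domain exhaustion at all: it solves an extremal problem (maximizing $f_z(0)$ over all univalent harmonic solutions of the Beltrami equation mapping $\D$ \emph{into} $\Omega$ with the given normalization), which yields non-degeneracy and the correct boundary behavior simultaneously via a variational argument; your approximation-of-the-dilatation route could in principle be completed, but every one of the steps above would have to be supplied.
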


We remark that if $\|\omega\|_{\infty}<1$, then the GRMT reduces to the RMT.
Then D. {Bshouty} and A. {Lyzzaik} and A. {Weitsman} in \cite{zbMATH05159460}, extended the latter uniqueness result of Hengartner and Schober in \cite{HengartnerSchober1986} for the case where the dilatation $\omega$ is a finite Blaschke product. Their result states as follows.

\begin{theorem}\cite{zbMATH05159460}
Let

\begin{enumerate}
    \item[(a)] $\Omega$ be a bounded convex domain,

    \item[(b)] $w_0$ be a fixed point of $\Omega$,

    \item[(c)] $\omega$ be a finite Blaschke product of order $N=1,2,\cdots$, and

    \item[(d)] $f$ be a function defined as in Theorem A.
\end{enumerate}

Then $f$ is unique.
\end{theorem}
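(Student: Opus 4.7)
The plan is to adapt the Hengartner--Schober uniqueness argument for the strictly elliptic case (Theorem~\ref{hes}) to the degenerate boundary case in which $\omega$ is a finite Blaschke product of order $N$, using the convexity of $\Omega$ as the crucial extra hypothesis that replaces uniform ellipticity on $\mathbb{T}$. Concretely I would suppose two solutions $f_1 = h_1 + \overline{g_1}$ and $f_2 = h_2 + \overline{g_2}$ of (a)--(e) exist with the same $\omega$, base point $w_0$, and normalization $f_{j,z}(0) > 0$, and examine $\phi = f_1 - f_2 = H + \overline{G}$ with $H = h_1 - h_2$, $G = g_1 - g_2$. Since $g_j' = \omega\, h_j'$, the identity $G' = \omega\, H'$ shows that $\phi$ itself satisfies the same Beltrami equation with $\phi(0) = 0$, and the problem reduces to showing $\phi \equiv 0$.

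The core of the argument is a boundary analysis exploiting the convexity of $\Omega$. By (c)--(d), each $f_j^*$ exists on $\mathbb{T}\setminus E_j$ and takes values in $\partial\Omega$. Because $\omega$ is inner, $|\omega(e^{it})| = 1$ a.e.\ on $\mathbb{T}$, and combining the Beltrami relation at the boundary with the convexity of $\Omega$ pins down the outward unit normal to $\partial\Omega$ at $f_j^*(e^{it})$ as an explicit function of $\omega(e^{it})$ and $e^{it}$, independent of which solution $f_j$ was chosen (this is the ``second boundary condition'' already used in the proof of Theorem~\ref{hes}). Hence $f_1^*$ and $f_2^*$ a.e.\ land at boundary points of $\Omega$ sharing the same outward normal; convexity then forces these points to lie on a common supporting line, and a short monotonicity argument along that line upgrades this to $f_1^* = f_2^*$ a.e.\ on $\mathbb{T}$. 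Thus $\phi^* = 0$ a.e., and an F.\ and M.\ Riesz-type boundary uniqueness theorem applied to the holomorphic pieces $H$ and $G$ yields $\phi \equiv 0$.

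The main obstacle I anticipate is the treatment of the countable exceptional set $E = E_1 \cup E_2$, where by (e) the cluster sets are genuine chords of $\overline{\Omega}$ rather than single boundary points and the pointwise ``same outward normal'' reasoning breaks down. This is precisely the place where the finite-Blaschke hypothesis becomes essential: the order $N$ uniformly bounds the boundary combinatorics, so the possible chord configurations compatible with the boundary identity above are finite in number, and convexity together with the normalization $f_{j,z}(0) > 0$ forces the chords of $f_1$ and $f_2$ to coincide at each exceptional point. Without this finiteness, uniqueness is known to fail for general inner dilatations, so this step is the genuine content of the extension of Hengartner--Schober obtained in \cite{zbMATH05159460}, rather than a routine boundary regularity argument.
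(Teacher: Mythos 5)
The paper does not prove this statement; it is quoted verbatim from Bshouty--Lyzzaik--Weitsman \cite{zbMATH05159460} as a known result, so your proposal can only be judged on its own merits, and as it stands it has a genuine gap at its central step. Your mechanism is: a.e.\ on $\mathbb{T}$ the tangent direction of the image boundary curve is pinned down by $\omega(e^{it})$, hence the outward normal to $\partial\Omega$ at $f_j^*(e^{it})$ is the same for $j=1,2$, hence (by convexity plus ``a short monotonicity argument'') $f_1^*=f_2^*$ a.e. The computation behind the first claim is correct: writing $u=ie^{i\theta}h'(e^{i\theta})$ and $\omega(e^{i\theta})=e^{2i\alpha}$ one gets $\partial_\theta f^*=2\Re(e^{i\alpha}u)\,e^{-i\alpha}$, so the tangent \emph{line} is determined by $\omega$. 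But this only says $\partial_\theta f^*$ is a real (possibly zero) multiple of $e^{-i\alpha(\theta)}$, and for a finite Blaschke product of order $N$ the direction $e^{-i\alpha(\theta)}$ has winding $-\pi N$ over $\mathbb{T}$ while the tangent of a positively traced convex boundary must turn by $+2\pi$; the only way to reconcile this is that $\partial_\theta f^*=0$ a.e. Indeed, for such $f$ the boundary function is a step function: $f(\mathbb{D})$ is a convex polygon with $N+2$ vertices on $\partial\Omega$ (exactly the bicentric-quadrilateral picture for $N=2$ used throughout this paper). Consequently your ``same outward normal a.e.''\ step is vacuous where it is supposed to do the work, and even where $\partial_\theta f^*\neq 0$ a normal direction does not determine a point of $\partial\Omega$ when the boundary has flat portions or corners, which a general bounded convex domain may have.

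The actual content of the theorem is therefore concentrated precisely in the part you defer: showing that the finitely many jump locations on $\mathbb{T}$ and the corresponding vertex values in $\partial\Omega$ are the same for $f_1$ and $f_2$. This requires an argument-principle/counting analysis tied to the order $N$ of the Blaschke product together with the normalizations $f_j(0)=w_0$, $f_{j,z}(0)>0$, not a ``routine'' convexity observation; your paragraph on the exceptional set names the difficulty but supplies no mechanism. Two smaller points: the final step does not need an F.\ and M.\ Riesz theorem --- once $\phi^*=0$ a.e., the bounded harmonic function $\phi$ is the Poisson integral of its boundary values and vanishes identically --- but you do need to justify that $h_j'$, $g_j'$ have nontangential boundary values a.e.\ (e.g.\ via $H^p$ membership of $h_j$, $g_j$ for univalent harmonic maps onto bounded domains) before any boundary differentiation is legitimate.
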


In order to formulate Theorem~\ref{prejprej}, which is the main tool in proving our results we begin by the following definition.
\begin{definition}
In Euclidean geometry, a bicentric quadrilateral is a convex quadrilateral that has both an incircle and a circumcircle.
Assume that  $Q=Q(a,b,c,d)$  is a bicentric quadrilateral inscribed in the unit disk $\mathbb{D}$. A minimal graph  $S=\{(u,v,\mathbf{f}(u,v)), (u,v)\in Q\}$ over the quadrilateral $Q$ is called a Scherk type surface if it satisfies $\mathbf{f}(u,v)\rightarrow + \infty$ when $(u,v) \rightarrow \zeta\in(a,b)\cup (c,d)$ and $\mathbf{f}(u,v)\rightarrow -\infty$ when $(u,v) \rightarrow \zeta\in(b,c)\cup (a,d)$ .
\end{definition}

\begin{theorem}\label{prejprej}\cite{kalaj2025gaussian, kalaj2021curvature}
For every $w\in\D$, there exist a bicentric quadrilateral $Q(a_0, a_1,a_2,a_3)$ with vertices at the unit circle $\mathbf{T}$  and then there is a harmonic unique mapping $f=f^w$ of the unit disk onto $Q(a_0,a_1,a_2,a_3)$ that solves the Beltrami equation \begin{equation}\label{beleq}\bar f_z(z) = \left(\frac{w+\frac{\imath \left(1-w^4\right) z}{\left|1-w^4\right|}}{1+\frac{\imath\overline{w} \left(1-w^4\right) z}{\left|1-w^4\right|}}\right)^2 f_z(z),\end{equation} $|z|<1$ and satisfies the initial condition $f(0)=0$, $f_z(0)>0$. It also defines a Scherk's type minimal surface $S^\diamond: \zeta=\mathbf{f}^\diamond(u,v)$ over the  quadrilateral $Q(a_0,a_1,a_2,a_3)$, with the centre $\mathbf{w}=(0,0,0)$ so that its Gaussian normal is stereographic projection of $w$: $$\mathbf{n}^\diamond_{\mathbf{w}}=-\frac{1}{1+|w|^2}(2\Im w, 2\Re w, -1+|w|^2),$$ and $D_{uv}\mathbf{f}^\diamond(0,0)=0$. Moreover, every other non-parametric minimal surface $S:$ $z=\mathbf{f}(u,v)$ over the unit disk, with a centre  $\mathbf{w}$, with $\mathbf{n}_{\mathbf{w}}=\mathbf{n}^\diamond_{\mathbf{w}}$ and $D_{uv}\mathbf{f}(0,0)=0$ satisfies the sharp inequality $$|\mathcal{K}_{S}(\mathbf{w})|<|\mathcal{K}_{S^\diamond}(\mathbf{w})|,$$ or what is the same
$$W^2_{S}|\mathcal{K}_{S}(\mathbf{w})|<W^2_{S^\diamond}|\mathcal{K}_{S^\diamond}(\mathbf{w})|.$$

\end{theorem}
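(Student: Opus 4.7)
The plan is to use the Weierstrass--Enneper correspondence between non-parametric minimal graphs over a planar domain $\Omega$ and sense-preserving univalent harmonic mappings $f=h+\bar g:\D\to\Omega$ whose Beltrami coefficient $\omega=g'/h'$ is the square of a holomorphic self-map $q=\sqrt\omega:\D\to\D$. Under this dictionary $q$ realises the stereographic projection of the upward unit normal, so the hypothesis on $\mathbf{n}_{\mathbf{w}}^\diamond$ becomes $q(0)=w$, the bending condition $D_{uv}\mathbf{f}(0,0)=0$ becomes a purely imaginary condition on $q'(0)$ after the normalisation $h'(0)>0$, and the Gaussian curvature at the centre admits the closed forms
\[
|\mathcal{K}_S(\mathbf{w})|=\frac{4|q'(0)|^2}{|h'(0)|^2(1+|w|^2)^4},\qquad W^2_S|\mathcal{K}_S(\mathbf{w})|=\frac{4|q'(0)|^2}{|h'(0)|^2(1+|w|^2)^2}.
\]
The first step is therefore to verify this translation carefully, so that the whole problem reduces to an extremal problem for harmonic mappings with prescribed dilatation value at the origin.

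Existence of $f^w$ is then a Hengartner--Schober problem with a finite Blaschke dilatation of order two. The $\omega$ in \eqref{beleq} is the square of a unimodular Möbius transformation of $\D$ sending $0$ to $w$, so $|\omega|\equiv 1$ on $\T$; the GRMT produces a univalent harmonic $f^w$ on $\D$ solving \eqref{beleq} with $f^w(0)=0$ and $(f^w)_z(0)>0$, and by the Bshouty--Lyzzaik--Weitsman theorem this $f^w$ is unique once the image is shown to be convex. The geometric content lies in identifying the image: at the four points on $\T$ where the quantity $\arg(\omega(e^{i\theta}))+2\theta$ is discontinuous, the boundary mapping $f^w|_\T$ jumps in direction, and between them it traces a straight chord because that quantity is locally constant. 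A direct computation using the explicit Möbius form of the square root of $\omega$ then shows that the four chords are all tangent to a common circle whose centre and radius depend only on $w$, giving both the circumcircle $\T$ and an incircle and hence the bicentric quadrilateral $Q(a_0,a_1,a_2,a_3)$.

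Given $f^w$, the Scherk-type height function is recovered from the Weierstrass data by
\[
\mathbf{f}^\diamond(u,v)=2\,\mathrm{Im}\int_0^z\sqrt{h'(\zeta)g'(\zeta)}\,d\zeta,
\]
whose integrand blows up as $z$ approaches each open arc of $\T$ and changes sign across each of the four preimages of the vertices (where $q$ unwinds by $\pi$); this produces the alternating $\pm\infty$ boundary behaviour characteristic of Scherk surfaces over $Q$. The identity $D_{uv}\mathbf{f}^\diamond(0,0)=0$ reduces under the dictionary above to $q'(0)\in i\R$, which is immediate from the explicit form of $\omega$ in \eqref{beleq}.

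The sharpness is the main obstacle. For any competing minimal graph $S$ over $\D$ with the same normal at $\mathbf{w}$ and $D_{uv}\mathbf{f}(0,0)=0$, the Weierstrass data $(\tilde h,\tilde g,\tilde q)$ satisfy $\tilde q(0)=w$, $\tilde q'(0)\in i\R$, and $\tilde f=\tilde h+\overline{\tilde g}$ is a univalent harmonic map of $\D$ into $\D$. By the displayed curvature formula the inequality reduces to
\[
\frac{|\tilde q'(0)|}{|\tilde h'(0)|}<\frac{|q'(0)|}{|h'(0)|},
\]
which I would establish by combining two Schwarz-type bounds. Since $\tilde q:\D\to\D$ with $\tilde q(0)=w$, the Schwarz--Pick inequality gives $|\tilde q'(0)|\le 1-|w|^2$, with equality only when $\tilde q$ is the specific Möbius transformation whose square is the $\omega$ of \eqref{beleq}; meanwhile, because $\tilde f$ maps $\D$ univalently into $\D$ with prescribed dilatation, a lower bound on $|\tilde h'(0)|$ follows from the harmonic Koebe-type estimate applied to $\tilde f$, saturated precisely when the image equals $\D$. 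The extremal $f^w$ saturates both estimates simultaneously because its image $Q$ is inscribed in $\T$ and its dilatation is the Schwarz--Pick extremal. Any competitor must lose strictly in at least one of these estimates, and the uniqueness clause of the Bshouty--Lyzzaik--Weitsman theorem prevents equality of data without equality of surfaces; this yields the strict inequality. The $W^2$-form is immediate from the second curvature identity above.
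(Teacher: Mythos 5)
The paper does not actually prove Theorem~\ref{prejprej}: it is imported from \cite{kalaj2025gaussian, kalaj2021curvature} and used as a black box, so there is no internal proof to compare against. Judged on its own merits, your proposal gets the Weierstrass--Enneper dictionary and the curvature formula right, and the existence/uniqueness framework (Hengartner--Schober GRMT plus Bshouty--Lyzzaik--Weitsman) is the correct one. Even there, though, the logic is reversed: the GRMT takes the target domain $\Omega$ as \emph{input}, so you cannot first ``produce $f^w$'' and then read off $Q$ from its boundary jumps. One must first determine which bicentric quadrilateral admits a harmonic map with exactly the dilatation \eqref{beleq} and the required normalization, which is a nontrivial parameter-matching argument that your sketch omits.

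The fatal gap is in the sharpness part. After Schwarz--Pick gives $|\tilde q'(0)|\le 1-|w|^2$, everything hinges on the lower bound $|\tilde h'(0)|\ge |f^w_z(0)|$ for every univalent harmonic map $\tilde f$ of $\D$ onto $\D$ with the prescribed dilatation data. You assert this ``follows from the harmonic Koebe-type estimate applied to $\tilde f$, saturated precisely when the image equals $\D$.'' This cannot be right: the extremal map $f^w$ has image $Q\subsetneq\D$, not $\D$, so your saturation claim contradicts your own identification of the extremal; and the available Koebe/Hall-type estimates (e.g.\ $|a_1|^2+\tfrac{3\sqrt3}{\pi}|a_0|^2+|b_1|^2\ge \tfrac{27}{4\pi^2}$, used in the proof of Theorem~\ref{seconth}) yield only $c_1\le 16\pi^2/27\approx 5.85$, strictly weaker than the sharp $\pi^2/2\approx 4.93$. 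If the required lower bound on $|\tilde h'(0)|$ followed from a known Koebe-type estimate, the Heinz curvature conjecture would not have remained open for seventy years; the actual argument in \cite{kalaj2025gaussian} is a substantial piece of analysis that your one-sentence appeal does not replace. A smaller but real error: equality in Schwarz--Pick at the origin holds for \emph{every} M\"obius automorphism of $\D$ sending $0$ to $w$ (a full rotation family), not only for the specific one whose square appears in \eqref{beleq}, so the bending condition and the equality analysis need to be handled more carefully than you indicate.
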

\begin{remark}
The unique harmonic mapping $f^w$ of the unit disk onto the quadrilateral $Q(w)$ determined in the previous theorem determines the functions $\mathbf{c}_0(w)$ and  $\mathbf{c}_1(w)$:
\begin{equation}\label{finoser0}\mathbf{c}_0(w):=-\mathcal{K}_{S^\diamond}(\mathbf{w})=\frac{4(1-|w|^2)^2 }{\left(1+|w|^2\right)^4 |f^w_z(0)|^2},\end{equation}
 \begin{equation}\label{finoser}\mathbf{c}_1(w):=-W^2_{S^\diamond}\mathcal{K}_{S^\diamond}(\mathbf{w})=\frac{4 }{\left(1+|w|^2\right)^2 |f^w_z(0)|^2}.\end{equation}
It  follows from Theorem~\ref{prejprej} and Theorem~\ref{inth} (i.e. from \eqref{finoser0} and \eqref{finoser}) that the Heinz  and Hopf constant can be determined as follows
 \begin{equation} \label{heinz}c_0 = \sup_{w} \frac{4 \left(1-|w|^2\right)^2}{\left(1+|w|^2\right)^4 |f^w_z(0)|^2}\end{equation} and  \begin{equation} \label{hopf}c_1 = \sup_{w} \frac{4 }{\left(1+|w|^2\right)^2 |f^w_z(0)|^2}.\end{equation}
%???The author and Melentijevi\'c  proved the conjecture in \cite{kalaj2025gaussian} and thus $c_0  = \pi^2/2$.
 \end{remark}

\begin{proof}[Proof of Theorem~\ref{inth}]  Let  $$\mu_k(z)=\left(\frac{w+k\frac{\imath \left(1- w^4\right) z}{\left|1-w^4\right|}}{1+k\frac{\imath\overline{w} \left(1-w^4\right) z}{\left|1-w^4\right|}}\right)^2,$$ where $k\in[0,1)$. Then $\|\mu_k\|_\infty\le \left(\frac{|w|+k}{1+|w|k}\right)^2<1$.
Let $f^k$ be a univalent solution of the Beltrami equation \begin{equation}\label{beleqk}\bar f_z(z) = \mu_k(z) f_z(z),\end{equation} $|z|<1$, mapping the unit disk onto itself such that $f^k(0)=0$ and $\partial f^k(0)>0$. This mapping exists and is unique (Theorem~\ref{hes}).
Note that $$\mathbf{q}=\sqrt{\mu_k}=\frac{w+k\frac{\imath \left(1- w^4\right) z}{\left|1-w^4\right|}}{1+k\frac{\imath\overline{w} \left(1-w^4\right) z}{\left|1-w^4\right|}}$$ is indeed  the Gauss map of the minimal surface, after an adequate
stereographic projection. See e.g.  \cite[p.~169]{Duren2004}). Hence, for $f=f^k$, $\mathbf{p}=f^k_z$,
\[
\varpi(z) = \left(\Re f(z), \Im f(z), \Im \int_0^z 2 \mathbf{p}(\zeta) \mathbf{q}(\zeta) d\zeta \right),\quad z\in\D.
\]
represent a conformal minimal parameterization of a minimal graph $S_k$.
The curvature $\mathcal{K}$ of the minimal graph $S_k$ at $\varpi(z)$ is expressed in terms
of the Enneper--Weierstrass parameters $(\mathbf{p},\mathbf{q})$, by
\begin{equation}\label{eq:curvatureformula}
\mathcal{K}(z) = - \frac{4|\mathbf{q}'(z)|^2}{|\mathbf{p}(z)|^2(1 + |\mathbf{q}(z)|^2)^4},
\end{equation}
where $\mathbf{p}=f_z$ and $\omega=\mathbf{q}^2=\overline{f_{\bar z}}/f_z$. (See Duren \cite[p.\ 184]{Duren2004}.)
Given
\[
\mathbf{q}(z) = \frac{w + k\frac{i(1 - w^4)z}{|1 - w^4|}}{1 + k\frac{i \overline{w}(1 - w^4)z}{|1 - w^4|}},
\]
we compute
\[
\frac{|\mathbf{q}'(0)|^2}{(1 + |\mathbf{q}(0)|^2)^4}.
\]

Since \( \mathbf{q}(0) = w \), we have \( |\mathbf{q}(0)| = |w| \). Also,
\[
\mathbf{q}'(0) = k \cdot \frac{i(1 - w^4)}{|1 - w^4|} (1 - |w|^2),
\]
so
\[
|\mathbf{q}'(0)|^2 = k^2 (1 - |w|^2)^2.
\]

Thus,
\[
{
\frac{|\mathbf{q}'(0)|^2}{(1 + |\mathbf{q}(0)|^2)^4} = \frac{k^2 (1 - |w|^2)^2}{(1 + |w|^2)^4}
}.
\]

Hence \begin{equation}\label{mk0}\mathcal{K}_k(0)=-\frac{k^2 (1 - |w|^2)^2}{|f^k_z(0)|^2(1 + |w|^2)^4}.\end{equation}

 As $k\to 1^-$, $f^k$  tends to a harmonic diffeomorphism $f^\ast$ of the unit disk onto a bicentric quadrilateral $Q$. Then such a mapping $f_\ast$ satisfies the conditions $f^\ast_z(0)>0 $, $f^\ast(0)=0$ and has the dilatation $$\mu(z) = \left(\frac{w+\frac{\imath \left(1- w^4\right) z}{\left|1-w^4\right|}}{1+\frac{\imath\overline{w} \left(1-w^4\right) z}{\left|1-w^4\right|}}\right)^2.$$

Moreover as it will be proved in the sequel (see Lemma~\ref{lelepa}), the Gaussian curvature  $-\mathcal{K}_k(\mathbf{w})$ at $\mathbf{w}$ is a continuous function on $k$, satisfying the conditions $-\mathcal{K}_0(\mathbf{w})=0$ and  $$\mathcal{K}_1(\mathbf{w})=\mathcal{K}_{S^\diamond}(\mathbf{w}).$$ This finishes the proof up to the Lemma~\ref{lelepa} below.
\end{proof}

 \begin{lemma}\label{lelepa}
 The function $P(k) = \mathcal{K}_k(\mathbf{w})$ is continuous.
 \end{lemma}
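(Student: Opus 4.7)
The plan is to reduce the continuity assertion to the continuous dependence of the normalized Beltrami solutions $f^k$ on the parameter $k$, and then treat the interior range $k\in[0,1)$ by a quasiconformal compactness argument while handling the endpoint $k=1$ via the generalized Riemann mapping theorem. By \eqref{mk0} one has
$$P(k)=-\frac{k^2(1-|w|^2)^2}{(1+|w|^2)^4\,|f^k_z(0)|^2},$$
so it suffices to prove that $k\mapsto f^k_z(0)$ is continuous on $[0,1]$, with value at $k=1$ equal to $f^\ast_z(0)=f^w_z(0)$ of Theorem~\ref{prejprej}.

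For $k\in[0,1)$ I would fix $K_0<1$ and observe that, uniformly in $k\in[0,K_0]$,
$$\|\mu_k\|_\infty\le\left(\frac{|w|+K_0}{1+|w|K_0}\right)^2=:\kappa_0<1,$$
so $\{f^k\}_{k\in[0,K_0]}$ is a family of $K$-quasiconformal self-maps of $\D$ with $K=(1+\sqrt{\kappa_0})/(1-\sqrt{\kappa_0})$, each fixing $0$ with positive $z$-derivative. By the standard compactness theory of quasiconformal mappings, this family is normal, and Mori-type estimates prevent any limit from collapsing to a constant. Given $k_n\to k_0$, I would extract a locally uniform limit $f^{k_{n_j}}\to f$. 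Since $\mu_k\to\mu_{k_0}$ uniformly on $\bar\D$ and the distributional $z$- and $\bar z$-derivatives of $K$-quasiconformal maps converge in $L^2_{\mathrm{loc}}$, passage to the limit in the Beltrami equation gives $\bar f_z=\mu_{k_0}f_z$; together with $f(0)=0$ and $f_z(0)\ge 0$, the uniqueness part of Theorem~\ref{hes} forces $f=f^{k_0}$. Writing $f^k=h_k+\overline{g_k}$ with $h_k,g_k$ holomorphic and $g_k(0)=0$ normalizes the decomposition, so $f^k_z(0)=h_k'(0)$; the local uniform convergence $f^{k_n}\to f^{k_0}$ gives local uniform convergence of $h_{k_n}\to h_{k_0}$ (the antiholomorphic part is pinned by $g_k(0)=0$), and Cauchy's formula then yields $h'_{k_n}(0)\to h'_{k_0}(0)$. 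This handles continuity on $[0,1)$.

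The main obstacle is the endpoint $k=1$, where $\|\mu_k\|_\infty\to 1$ and the uniform quasiconformality degenerates. My plan here is to invoke the generalized framework: $\mu_1=\mathbf{q}^2$ with $\mathbf{q}$ a M\"obius transformation, hence a Blaschke product of order $1$, so the image $f^1(\D)=Q$ is a bicentric quadrilateral. The Hengartner--Schober GRMT together with the Bshouty--Lyzzaik--Weitsman uniqueness theorem identifies $f^\ast=f^w$ as the unique harmonic mapping of $\D$ onto a bounded convex domain with the prescribed dilatation and the normalizations $f^\ast(0)=0$, $f^\ast_z(0)>0$. I would combine a Carath\'eodory kernel argument controlling the convergence of the image domains $f^k(\D)\to Q$ with the local equicontinuity of $\{f^k\}$ on compact subsets of $\D$ (coming from quasiconformality for each fixed $k<1$) to show that any subsequential limit of $(f^k)_{k\to 1^-}$ must coincide with $f^\ast$, forcing $f^k_z(0)\to f^\ast_z(0)$. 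Substituting back, $P(k)\to -\mathcal{K}_{S^\diamond}(\mathbf{w})\cdot(-1)=\mathcal{K}_1(\mathbf{w})$, completing the continuity of $P$ on $[0,1]$. The hard step, which I expect to consume most of the work, is the simultaneous control of the boundary behavior and of the normalization $f_z(0)>0$ as $k\to 1^-$, since here the ordinary quasiconformal compactness tools cease to apply and one must exploit the specific geometry of Blaschke dilatations onto convex targets.
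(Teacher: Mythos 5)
Your reduction is the same as the paper's: via \eqref{mk0}, continuity of $P$ is equivalent to continuity of $k\mapsto f^k_z(0)$. From there, however, you take a genuinely different route on the interior range $k\in[0,K_0]\subset[0,1)$. The paper represents the solution as $f(z)=z\exp(\mathcal{P}\varphi(z))$ with $\varphi$ the unique fixed point of a nonlinear operator $T_\omega$ on $L^p(\mathbb{D})$, $p>2$, proves joint continuity and compactness of $T$, deduces by a subsequence-plus-uniqueness argument that $\varphi$ depends continuously on $\omega$ in $L^p$, and reads off $f_z(0)=e^{(\mathcal{P}\varphi)(0)}$ (with a M\"obius renormalization when $\omega(0)\neq 0$). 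You instead use uniform quasiconformality of the family, normality, passage to the limit in the Beltrami equation, and the uniqueness clause of Theorem~\ref{hes}. Both arguments are sound; yours is more geometric and avoids the integral-equation machinery, while the paper's gives an explicit formula for $f_z(0)$ and transfers verbatim to any family $\omega_t\to\omega_{t_0}$ in $L^\infty$ with a uniform bound $\|\omega_t\|_\infty\le k<1$. One point you should make explicit in your version: you must know that the locally uniform limit $f$ is a harmonic homeomorphism of $\mathbb{D}$ \emph{onto} $\mathbb{D}$ (not merely into) before the uniqueness of Theorem~\ref{hes} applies; this follows from equicontinuity of the inverses (Mori's theorem applied to $(f^k)^{-1}$), but it is not automatic from normality alone.

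Concerning the endpoint $k=1$: you correctly identify this as the genuinely hard step and leave it as a plan (GRMT, the Bshouty--Lyzzaik--Weitsman uniqueness, and Carath\'eodory kernel convergence of the images onto the bicentric quadrilateral). Be aware that the paper's auxiliary lemma also assumes $\|\omega_t\|_\infty\le k<1$ uniformly, so it too establishes continuity only on $[0,1)$; the identification $\lim_{k\to1^-}f^k_z(0)=f^w_z(0)$, equivalently $\mathcal{K}_1(\mathbf{w})=\mathcal{K}_{S^\diamond}(\mathbf{w})$, is asserted in the proof of Theorem~\ref{inth} by appeal to Theorem~\ref{prejprej} rather than derived inside the lemma. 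So your proposal is not weaker than the paper at this point, but if the lemma is to cover $k=1$ you would indeed need to carry out the kernel-convergence argument you sketch, including a lower bound on $f^k_z(0)$ as $k\to1^-$ to prevent degeneration of the normalization.
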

 The previous lemma follows from \eqref{mk0} and the following lemma
 \begin{lemma} Let $f^t$ be a family of harmonic mappings of the unit disk onto itself solving the Beltrami equation with the Beltrami coefficient $\omega_t(z)$ satisfying the condition $|\omega_t(z)|\le k<1$ and $f^t(0)=0$, $f^t_z(0)>0$. Then if $\omega_t \to \omega_{t_0}$ in $L^\infty(\mathbb{D})$ norm, then $f^t_z(0)\to f^{t_0}_z(0)$. Here $t\in[0,1]$.
\end{lemma}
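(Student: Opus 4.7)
The plan is to prove continuity by a normal-family / uniqueness argument. Fix $t_0\in[0,1]$ and let $t_n\to t_0$ in $L^\infty$. The uniform bound $\|\omega_t\|_\infty\le k<1$ means each $f^t$ is $K$-quasiconformal with $K=(1+k)/(1-k)$, and together with the normalization $f^t(0)=0$ this places $\{f^t\}$ in a normal family. Passing to a subsequence I may assume $f^{t_n}\to f^\ast$ locally uniformly on $\mathbb{D}$. The strategy is to identify $f^\ast$ with $f^{t_0}$ by verifying the hypotheses of the Hengartner--Schober Theorem~\ref{hes}; sequential continuity then gives $f^{t_n}_z(0)\to f^{t_0}_z(0)$.

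Three items need to be checked for $f^\ast$. First, harmonicity is preserved by locally uniform limits, and interior estimates for harmonic functions give locally uniform convergence of all partial derivatives; thus $f^\ast$ is harmonic, $(f^{t_n})_z\to f^\ast_z$ locally uniformly, and in particular $f^{t_n}_z(0)\to f^\ast_z(0)$. Second, to pass the Beltrami equation $\overline{(f^{t_n})_{\bar z}}=\omega_{t_n}(f^{t_n})_z$ to the limit, note that the left side converges locally uniformly, while on the right $(f^{t_n})_z$ is locally bounded and $\omega_{t_n}\to\omega_{t_0}$ in $L^\infty$, so the product converges in $L^\infty_{\mathrm{loc}}$ to $\omega_{t_0}f^\ast_z$; hence $\overline{f^\ast_{\bar z}}=\omega_{t_0}f^\ast_z$. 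Third, $f^\ast(0)=0$ and $f^\ast_z(0)$ is real and nonnegative; since a nonconstant harmonic univalent map has nowhere-vanishing Jacobian by Lewy's theorem, it follows that in fact $f^\ast_z(0)>0$.

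The hard part will be the remaining requirement of Theorem~\ref{hes}: namely, that $f^\ast$ is a homeomorphism of $\mathbb{D}$ \emph{onto} $\mathbb{D}$, not merely into it. I plan to argue this using the inverse family $\{(f^t)^{-1}\}$, which again consists of $K$-quasiconformal self-maps of $\mathbb{D}$ fixing the origin and hence is also normal. Extracting a further subsequence with $(f^{t_n})^{-1}\to g^\ast$ locally uniformly on $\mathbb{D}$ and using the identity $(f^{t_n})^{-1}\circ f^{t_n}=\mathrm{id}$, a standard compactness argument yields $g^\ast\circ f^\ast=f^\ast\circ g^\ast=\mathrm{id}$ on $\mathbb{D}$, so $f^\ast$ is a self-homeomorphism of $\mathbb{D}$. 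With all hypotheses in place, uniqueness in Theorem~\ref{hes} forces $f^\ast=f^{t_0}$, which completes the proof.

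The uniform gap $1-k>0$ between $|\omega_t|$ and $1$ is essential at this last step: without it, the limit $f^\ast$ could degenerate at the boundary or fail to be onto, and one would need the more delicate GRMT framework in place of Theorem~\ref{hes}. Everything else in the argument reduces to standard interior regularity for harmonic functions and the robustness of the Beltrami equation under $L^\infty$ perturbations of the coefficient together with locally uniform convergence of solutions.
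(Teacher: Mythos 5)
Your argument is correct, but it takes a genuinely different route from the paper. You run a normal-family argument: the uniform bound $\|\omega_t\|_\infty\le k<1$ makes the $f^t$ (and their inverses) normalized $K$-quasiconformal self-homeomorphisms of $\mathbb{D}$, you extract a locally uniform limit $f^\ast$, pass the Beltrami equation to the limit, verify the hypotheses of Theorem~\ref{hes} for $f^\ast$, and invoke its uniqueness clause to force $f^\ast=f^{t_0}$; convergence of $f^t_z(0)$ then follows from interior estimates for harmonic functions and the usual subsequence-of-subsequences step. The paper instead works with the integral-equation representation $f(z)=z\exp(\mathcal{P}\varphi(z))$ from Duren's book: it proves continuous dependence of the fixed point $\varphi_t=T_{\omega_t}(\varphi_t)$ on $\omega_t$ using joint continuity and compactness of the operator $T$, reads off $f_z(0)=e^{(\mathcal{P}\varphi)(0)}$ when $\omega(0)=0$, and reduces the general case by the M\"obius normalization $\tilde\omega=(\omega-\omega(0))/(1-\overline{\omega(0)}\omega)$ together with the explicit formula $f_z(0)=\tilde f_z(0)/(1-|\omega(0)|^2)$. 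Your approach is more self-contained relative to the results already quoted in the paper and dispenses with the operator machinery and with the $\omega(0)\neq0$ case distinction; its cost is that the entire weight rests on the compactness of the family of normalized $K$-quasiconformal self-homeomorphisms of the disk (closedness under locally uniform limits, non-degeneracy of the limit, and surjectivity via the inverse family), which you should either cite from Lehto--Virtanen or spell out --- in particular the facts that $|f^{t_n}(z)|$ stays uniformly away from $1$ on compact subsets (so that the composition identity $(f^{t_n})^{-1}\circ f^{t_n}=\mathrm{id}$ can be passed to the limit) and that $f^\ast$ is nonconstant. The paper's route, by contrast, yields an explicit formula for $f_z(0)$ and quantitative control, at the price of heavier analysis.
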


\begin{proof} Denote for the simplicity $\omega=\omega_t$.
Assume first that $\omega(0) = 0$. Let $\omega(z)$ be an analytic function in the unit disk $\mathbb{D} = \{z \in \mathbb{C} : |z| < 1\}$, satisfying the bound $|\omega(z)| \leq k < 1$ for some constant $k$. We are given that a unique fixed point $\varphi \in L^p(\mathbb{D})$ exists for some $p > 2$, solving the fixed-point equation:
\[
\varphi = T_\omega(\varphi),
\]
where the nonlinear operator $T_\omega$ is defined by
\[
T_\omega(\varphi)(z) = \frac{\overline{\omega(z)}}{z} \cdot (\mathcal{M}\varphi)(z)\left[1 + \overline{z} \cdot (\mathcal{H}\varphi)(z)\right].
\]

The operator $\mathcal{M}$ denotes a certain nonlinear multiplier, while $\mathcal{H}$ is the \textit{Hilbert transform} on $\mathbb{D}$, defined as the complex derivative of a modified \textit{Cauchy transform} $\mathcal{P}$:
\[
(\mathcal{P}\varphi)(z) = -\frac{1}{\pi} \iint_{\mathbb{D}} \left\{ \frac{\varphi(\zeta)}{\zeta - z} + \frac{z\overline{\varphi(\zeta)}}{1 - \overline{\zeta} z} - \frac{\varphi(\zeta)}{2\zeta} + \frac{\overline{\varphi(\zeta)}}{2\overline{\zeta}} \right\} d\xi \, d\eta, \quad \zeta = \xi + i\eta.
\]

Its derivative with respect to $z$ is given by
\[
(\mathcal{P}\varphi)_z(z) = -\frac{1}{\pi} \iint_{\mathbb{D}} \frac{\varphi(\zeta)}{(\zeta - z)^2} \, d\xi \, d\eta
- \frac{1}{\pi} \iint_{\mathbb{D}} \frac{\overline{\varphi(\zeta)}}{(1 - \overline{\zeta} z)^2} \, d\xi \, d\eta.
\]
The first integral is understood in the principal value sense and corresponds to the classical Hilbert transform.

For notational convenience, we write
\[
(\mathcal{P}\varphi)_z = \mathcal{H}\varphi.
\]

As shown in \cite[p.~132]{Duren2004}, the solution of the Beltrami equation in this context is given by
\[
f(z) = z \exp(\mathcal{P}\varphi(z)).
\]
This operator $\mathcal{H}$ has Hilbert norm equal to 1. See \cite[p.~151]{AstalaIwaniecMartin2009} or  \cite{KalajMelentijevicZhu2020}.
The term $\mathcal{M}\varphi$ is defined as:
\[\mathcal{M}\varphi =  \exp\{-2i \text{Im}\{\mathcal{P}\varphi\}\}. \text{}\]

A key property is $|\mathcal{M}\varphi| = 1$. Observe that in this case $\Omega=\mathbb{D}$ so, in notation of \cite{Duren2004} $\Phi(z)=z$.

Our primary objective is to demonstrate that if a sequence of parameters $\omega_t$ converges to $\omega_0$ (i.e., $\omega_t \to \omega_0$ in the $L^\infty(\mathbb{D})$ norm, implying uniform convergence on the unit disk), then their corresponding unique fixed points $\varphi_t$ converge to $\varphi_0$ (i.e., $\varphi_t \to \varphi_0$) in $L^p(\mathbb{D})$.

\subsection{Continuous dependence of the fixed point $\varphi_t$ on $\omega_t$}

To establish the continuous dependence of the fixed point $\varphi_t$ on the parameter $\omega_t$, we analyze the properties of the operator $T(\omega, \varphi)$, which involves examining its continuity and compactness.

\subsubsection{Properties of components}

We begin by observing that the quantity \(\frac{\overline{\omega_t(z)}}{z}\) remains uniformly bounded. Each function \(\omega_t(z)\) is analytic in the unit disk \(\mathbb{D}\), vanishes at the origin, and satisfies \(|\omega_t(z)| \leq k < 1\). Defining \(h_t(z) = \frac{\omega_t(z)}{z}\), the generalized Schwarz Lemma implies \(|h_t(z)| \leq k\) for all \(z \in \mathbb{D}\), including \(z=0\) where \(|h_t(0)| = |\omega_t'(0)| \leq k\). Therefore, \(h_t\) is uniformly bounded in \(L^\infty(\mathbb{D})\), and so is its complex conjugate over \(z\), yielding
\[
\left\|\frac{\overline{\omega_t(z)}}{z}\right\|_{L^\infty} \leq k < 1 \quad \text{for all } t.
\]

Next, consider the key operators involved. The Hilbert transform \(\mathcal{H}\) acts boundedly on \(L^p(\mathbb{D})\) for \(1 < p < \infty\), with norm depending continuously on \(p\); in particular, its norm on \(L^2\) is 1. The operator \(P\), as defined in the context, is compact from \(L^p(\mathbb{D})\) to itself when \(p > 2\), and it maps into the Sobolev space \(W^{1,p}(\mathbb{D})\). By the Sobolev embedding theorem, this image lies in \(C(\overline{\mathbb{D}})\), so \(P\varphi\) defines a continuous function on the closed disk whenever \(\varphi \in L^p\) for \(p > 2\).

The operator \(\mathcal{M}\varphi\) incorporates an exponential composed with \(P\varphi\), and thus inherits continuity due to the continuity of \(P\varphi\). Given the modulus constraint \(|\mathcal{M}\varphi| = 1\), this operator maps bounded sets in \(L^p(\mathbb{D})\) into bounded sets in \(L^\infty(\mathbb{D})\) via a continuous (though nonlinear) transformation.

Let us now turn to the central object of interest, the operator \(T(\omega, \varphi)\), and analyze its continuity and compactness. The goal is to establish joint continuity in both variables and compactness for fixed \(\omega\).

To show joint continuity, suppose \(\omega_t \to \omega_0\) in \(L^\infty(\mathbb{D})\) and \(\varphi_t \to \varphi_0\) in \(L^p(\mathbb{D})\). We estimate the difference
\[
\|T(\omega_t, \varphi_t) - T(\omega_0, \varphi_0)\|_{L^p}
\]
using the triangle inequality:
\[
\le \|T(\omega_t, \varphi_t) - T(\omega_t, \varphi_0)\|_{L^p} + \|T(\omega_t, \varphi_0) - T(\omega_0, \varphi_0)\|_{L^p}.
\]

For the second term, write \(T(\omega, \varphi) = \frac{\overline{\omega}}{z} A(\varphi)\), where \(A(\varphi) = \mathcal{M}\varphi [1 + \overline{z}(\mathcal{H}\varphi)]\). Then
\[
\|T(\omega_t, \varphi_0) - T(\omega_0, \varphi_0)\|_{L^p} = \left\| \left(\frac{\overline{\omega_t(z)}}{z} - \frac{\overline{\omega_0(z)}}{z}\right) A(\varphi_0) \right\|_{L^p}
\]
\[
\le \left\|\frac{\overline{\omega_t(z)}}{z} - \frac{\overline{\omega_0(z)}}{z}\right\|_{L^\infty} \cdot \|A(\varphi_0)\|_{L^p},
\]
which tends to zero since \(\omega_t \to \omega_0\) in \(L^\infty\) and \(A(\varphi_0)\) is fixed in \(L^p\).

For the first term, we express the difference using
\[
T(\omega, \varphi) = \frac{\overline{\omega}}{z} \cdot \mathcal{M}\varphi \cdot (1 + \overline{z} \mathcal{H}\varphi).
\]
Applying the product rule \((ab - cd) = (a - c)b + c(b - d)\), we write
\[
M(\varphi_t)N(\varphi_t) - M(\varphi_0)N(\varphi_0) = (M(\varphi_t) - M(\varphi_0))N(\varphi_t) + M(\varphi_0)(N(\varphi_t) - N(\varphi_0)),
\]
where \(M(\varphi) = \mathcal{M}\varphi\), and \(N(\varphi) = 1 + \overline{z} \mathcal{H}\varphi\).

The first term vanishes since \(M(\varphi_t) \to M(\varphi_0)\) uniformly in \(L^\infty\) and \(N(\varphi_t)\) remains bounded in \(L^p\). The second vanishes because \(N(\varphi_t) \to N(\varphi_0)\) in \(L^p\), due to the continuity of \(\mathcal{H}\). Thus, both contributions tend to zero, establishing the joint continuity of \(T(\omega, \varphi)\).

For compactness, recall that \(P\) is compact from \(L^p\) into itself (for \(p > 2\)), and its image lies in \(C(\overline{\mathbb{D}})\). The operator \(\mathcal{M}\), built from continuous operations on \(P\varphi\), thus maps bounded subsets of \(L^p\) into relatively compact subsets of \(L^\infty\). Composing this with bounded multipliers such as \(\frac{\overline{\omega}}{z}\) and the Hilbert transform yields that \(T(\omega, \cdot)\) is compact on \(L^p(\mathbb{D})\).

We now turn to the continuous dependence of the fixed point \(\varphi_t\) on \(\omega_t\). Suppose, for contradiction, that \(\varphi_t \not\to \varphi_0\) in \(L^p\). Then there exists \(\epsilon_0 > 0\) and a subsequence (still denoted \(\varphi_t\)) such that \(\|\varphi_t - \varphi_0\|_{L^p} \ge \epsilon_0\) for all \(t\).

Since the operator \(T(\omega_t, \cdot)\) is compact and each \(\varphi_t = T(\omega_t, \varphi_t)\), the sequence \(\{\varphi_t\}\) is uniformly bounded. By compactness, it has a convergent subsequence \(\varphi_{t_k} \to \varphi^*\). Passing to the limit in the fixed-point equation yields
\[
\varphi^* = \lim_{k \to \infty} T(\omega_{t_k}, \varphi_{t_k}) = T(\omega_0, \varphi^*),
\]
so \(\varphi^*\) is a fixed point of \(T(\omega_0, \cdot)\). Uniqueness then implies \(\varphi^* = \varphi_0\), contradicting the assumption \(\|\varphi_{t_k} - \varphi_0\| \ge \epsilon_0\). Hence, \(\varphi_t \to \varphi_0\) in \(L^p(\mathbb{D})\).

Having established that the fixed point \(\varphi_t\) depends continuously on the parameter \(\omega_t\), we now apply this to show the continuous dependence of the derivative \(f_z(0)\) of the associated harmonic mapping.

\vspace{1em}
\noindent
The harmonic map \(f(z)\) is related to \(\varphi(z)\) through the operator \(P\). In particular, when \(\omega(0) = 0\), the derivative at the origin satisfies \(f_z(0) = e^{(P\varphi)(0)}\). The value \((P\varphi)(0)\) is given explicitly by
\[
(P\varphi)(0) = -\frac{1}{\pi} \iint_{\mathbb{D}} \left( \frac{\varphi(\zeta)}{2\zeta} + \frac{\overline{\varphi(\zeta)}}{2\overline{\zeta}} \right) d\xi \, d\eta,
\]
which yields a real value due to the symmetry of the integrand. Consequently, \(f_z(0)\) is a positive real number.

When \(\omega_t(0) = 0\), the continuity of \(f_z^t(0)\) in \(t\) follows directly. Since we already know \(\varphi_t \to \varphi_0\) in \(L^p(\mathbb{D})\), and the map \(\varphi \mapsto (P\varphi)(0)\) is a continuous linear functional on \(L^p(\mathbb{D})\), it follows that \((P\varphi_t)(0) \to (P\varphi_0)(0)\). The continuity of the exponential function then ensures that
\[
f_z^t(0) = e^{(P\varphi_t)(0)} \to e^{(P\varphi_0)(0)} = f_z^0(0).
\]

In the more general case where \(\omega_t(0) \neq 0\), we normalize the Beltrami coefficient using the automorphism
\[
\tilde{\omega}_t(z) = \frac{\omega_t(z) - \omega_t(0)}{1 - \overline{\omega_t(0)} \omega_t(z)},
\]
which satisfies \(\tilde{\omega}_t(0) = 0\) and retains the bound \(|\tilde{\omega}_t(z)| < 1\). This normalization transforms the original harmonic mapping \(f(z)\) into another mapping \(\tilde{f}(z)\), corresponding to \(\tilde{\omega}_t(z)\), for which the origin condition is satisfied.

This leads to the representation
\[
f(z) = \frac{\tilde{f}(z) + \omega(0) \, \overline{\tilde{f}(z)}}{1 - |\omega(0)|^2},
\]
where \(\tilde{f}\) solves the Beltrami equation \(\tilde{f}_{\overline{z}} = \tilde{\omega} \tilde{f}_z\) with \(\tilde{f}(0) = 0\) and \(\tilde{f}_z(0) > 0\).

Differentiating the formula for \(f(z)\) yields
\[
f_z(z) = \frac{1}{1 - |\omega(0)|^2} \left( \tilde{f}_z(z) + \omega(0) \, \overline{\tilde{f}_{\overline{z}}(z)} \right).
\]
At \(z = 0\), we know \(\tilde{f}_{\overline{z}}(0) = \tilde{\omega}(0)\tilde{f}_z(0) = 0\), hence
\[
f_z(0) = \frac{\tilde{f}_z(0)}{1 - |\omega(0)|^2}.
\]

Now, suppose \(\omega_t \to \omega_0\) in \(L^\infty(\mathbb{D})\). Then by continuity of evaluation at the origin, \(\omega_t(0) \to \omega_0(0)\). It follows that the denominator \(1 - |\omega_t(0)|^2\) converges to \(1 - |\omega_0(0)|^2\), a positive number since \(|\omega(0)| < 1\).

Moreover, since the map \(\omega \mapsto \tilde{\omega}\) is continuous with respect to the \(L^\infty\)-norm, we have \(\tilde{\omega}_t \to \tilde{\omega}_0\) in \(L^\infty(\mathbb{D})\). From the previous case (where the normalized Beltrami coefficient vanishes at the origin), we know that \(\tilde{f}_z^t(0) \to \tilde{f}_z^0(0)\). Combining these facts, we obtain
\[
f_z^t(0) = \frac{\tilde{f}_z^t(0)}{1 - |\omega_t(0)|^2} \to \frac{\tilde{f}_z^0(0)}{1 - |\omega_0(0)|^2} = f_z^0(0),
\]
as \(t \to 0\).

Hence, regardless of whether \(\omega_t(0) = 0\) or not, the quantity \(f_z^t(0)\) depends continuously on \(\omega_t\) in the topology of \(L^\infty(\mathbb{D})\).

\end{proof}

%Now \eqref{finoser} and Theorem~\ref{prejprej} (or the result of Finn and Osserman), implies the following lemma.
%\begin{proposition}
%Assume that $f$  solves the equation $$\bar f_z(z) = z^2 f_z(z),$$ with the initial conditions $f(0)=0$ and $f_z(0)>0$. Assume also that $f$ maps the boundary into the boundary.  Then the sharp inequality $$|f_z(0)|\ge \frac{2\sqrt{2}}{\pi}$$ holds.
%\end{proposition}
%\begin{proof}
%Such mapping gives rise of a Scherk type surface over a quadrilateral containing $0$ inside. The Gaussian curvature of such surface is equal to $$-\frac{4}{|f_z(0)|^2}$$ whose absolute value is nog bigger than $\pi^2/2$. This implies the claim.
%\end{proof}

\begin{proof}[Proof of Theorem~\ref{seconth}] By Hall result \cite{Hall1982}
\[
|a_1|^2 + \frac{3\sqrt{3}}{\pi}|a_0|^2 + |b_1|^2 \geq \frac{27}{4\pi^2}.
\]
Since $a_0=0$ and  $b_1=(\mathbf{q}(0))^2 a_1$, for $w=\mathbf{q}(0)$, we have
\[
|a_1|^2 + |w|^4 |a_1|^2 \geq \frac{27}{4\pi^2}.
\]
Then
\[
|a_1|^2 \geq \frac{27}{4\pi^2 (1 + |w|^4)},
\]
so
\[
c_1 = \sup_{|w| < 1} \frac{4}{(1 + |w|^2)^2 |a_1|^2}
\leq \frac{16\pi^2}{27} \cdot \sup_{|w| < 1} \frac{1 + |w|^4}{(1 + |w|^2)^2}.
\]
Letting \( x = |w|^2 \), we obtain
\[
\sup_{0 \le x < 1} \frac{1 + x^2}{(1 + x)^2} = 1,
\]
and therefore
\[
c_1 \le  \frac{16\pi^2}{27}.
\]

Further from \eqref{finoser}, for a fixed unit normal determined by $w$ and by the condition $\mathbf{f}_{u,v}(0,0)=0$, we have $$\mathcal{K}(\mathbf{w})\le \mathbf{c}_0(w)=\frac{4 \left(1-|w|^2\right)^2}{\left(1+|w|^2\right)^4 |f^w_z(0)|^2}.$$

Thus we obtain that \begin{equation}\label{merku}
\mathcal{K}(\mathbf{w})\le \frac{16\pi^2}{27}\frac{1+|w|^4}{(1+|w|^2)^2}.
\end{equation}

According to \eqref{heinz} and \eqref{hopf} and by using the proved fact that $c_0=\pi^2/2$ we obtain $$\mathbf{c}_1(w)\le \frac{\pi^2}{2}\frac{(1+|w|^2)^2}{(1-|w|^2)^2}.$$
Thus combining by \eqref{merku}, we obtain  $$\mathbf{c}_1(w) < \min \left\{\frac{\pi^2}{2}\frac{(1+|w|^2)^2}{(1-|w|^2)^2},\frac{16\pi^2}{27}\frac{1+|w|^4}{(1+|w|^2)^2}\right\}.$$

We also have $$c_1=\sup_{|w|<1} \mathbf{c}_1(w).$$
We want to find the maximum of the function:
$$f(|w|) = \min\left(\frac{\pi^2}{2}\frac{(1+|w|^2)^2}{(1-|w|^2)^2}, \frac{16\pi^2}{27}\frac{1+|w|^4}{(1+|w|^2)^2}\right) \quad \text{for } |w| \in [0, 1].$$

Let $x = |w|^2$. Since $|w| \in [0, 1]$, we have $x \in [0, 1]$. The function can be rewritten as:
$$g(x) = \min\left(f_1(x), f_2(x)\right)$$
where
$$f_1(x) = \frac{\pi^2}{2}\frac{(1+x)^2}{(1-x)^2}$$
$$f_2(x) = \frac{16\pi^2}{27}\frac{1+x^2}{(1+x)^2}.$$
It is easy to prove that  $f_1(x)$ is increasing and $f_2(x)$ is decreasing on $[0, 1)$, so the maximum  of $g(x)$ occurs at their intersection point.

Set $f_1(x) = f_2(x)$:
$$\frac{\pi^2}{2}\frac{(1+x)^2}{(1-x)^2} = \frac{16\pi^2}{27}\frac{1+x^2}{(1+x)^2}.$$
Divide by $\pi^2$ and cross-multiply:
$$27(1+x)^4 = 32(1+x^2)(1-x)^2$$
This is equivalent with
$$5x^4 - 172x^3 - 98x^2 - 172x + 5 = 0.$$

 Since $x=0$ is not a solution, we can divide by $x^2$:
$$5x^2 - 172x - 98 - \frac{172}{x} + \frac{5}{x^2} = 0$$
Group terms:
$$5\left(x^2 + \frac{1}{x^2}\right) - 172\left(x + \frac{1}{x}\right) - 98 = 0$$
Let $y = x + \frac{1}{x}$. Then $y^2 = x^2 + 2 + \frac{1}{x^2}$, so $x^2 + \frac{1}{x^2} = y^2 - 2$.
Substitute into the equation:
$$5y^2 - 172y - 108 = 0.$$
Then
$$y_{1,2} = \frac{172 \pm 32\sqrt{31}}{10} = \frac{86 \pm 16\sqrt{31}}{5}$$
Since $x \in [0, 1]$, $x$ is real and positive. For $x \in (0, 1]$, $y = x + \frac{1}{x} \ge 2$.
We choose the positive root:
$$y = \frac{86 + 16\sqrt{31}}{5}.$$

Substitute $y$ back into a simplified form of $f_1(x)$. We can rewrite $f_1(x)$ in terms of $y$:
$$f_1(x) = \frac{\pi^2}{2}\frac{(1+x)^2}{(1-x)^2} = \frac{\pi^2}{2}\frac{1+2x+x^2}{1-2x+x^2}$$
Divide numerator and denominator by $x$:
$$f_1(x) = \frac{\pi^2}{2}\frac{\frac{1}{x}+2+x}{\frac{1}{x}-2+x} = \frac{\pi^2}{2}\frac{\left(x+\frac{1}{x}\right)+2}{\left(x+\frac{1}{x}\right)-2} = \frac{\pi^2}{2}\frac{y+2}{y-2}$$
The maximum value is:
$$g_{max} = \frac{\pi^2}{2}\frac{\frac{96 + 16\sqrt{31}}{5}}{\frac{76 + 16\sqrt{31}}{5}} = \frac{\pi^2}{2} \frac{96 + 16\sqrt{31}}{76 + 16\sqrt{31}}=  2\pi^2 \frac{2+\sqrt{31}}{27}.$$

\end{proof}
\subsection*{Acknowledgements}
The author is partially supported by the Ministry of Education, Science and Innovation of Montenegro through the grant \emph{Mathematical Analysis, Optimization and Machine Learning}.
{\bibliographystyle{abbrv} \bibliography{references}}

%%%%%%%%%%
%%%%%%%%%%
%%%%%%%%%%
%%%%%%%%%%   AFFILIATIONS
%%%%%%%%%%
%%%%%%%%%%

\noindent David Kalaj

\noindent University of Montenegro, Faculty of Natural Sciences and Mathematics, 81000, Podgorica, Montenegro

\noindent e-mail: {\tt davidk@ucg.ac.me}

\end{document}